\newtheorem{theorem}{Theorem}[section]
\newtheorem{corollary}[theorem]{Corollary}
\newtheorem{lemma}[theorem]{Lemma}
\newtheorem{proposition}[theorem]{Proposition}
\newtheorem{remark}[theorem]{Remark}
\newenvironment{proof}[1][Proof]{\noindent\textbf{#1.} }{\ \rule{0.5em}{0.5em}}
\def\@makecaption#1#2{\vskip\abovecaptionskip
\hb@xt@\hsize{\hfil#2\hfil}\vskip\belowcaptionskip}
\begin{document}

\title{Catalan States of Lattice Crossing}
\author{Mieczyslaw K. Dabkowski \and Changsong Li \and Jozef H. Przytycki}
\date{}
\maketitle

\begin{abstract}
For a Lattice crossing $L\left( m,n\right) $ we show which Catalan
connection between $2\left( m+n\right) $ points on boundary of $m\times n$
rectangle $P$ can be realized as a Kauffman state and we give an explicit
formula for the number of such Catalan connections. For the case of a
Catalan connection with no arc starting and ending on the same side of the
tangle, we find a closed formula for its coefficient in the Relative
Kauffman Bracket Skein Module of $P\times I$.

\noindent

\textit{{Keywords}: \textup{Knot, Link, Kauffman Bracket, Catalan Tangles}}

\textit{\bigskip }

\textit{\noindent }

\textit{\textup{Mathematics Subject Classification 2000: Primary 57M99;
Secondary 55N, 20D}}
\end{abstract}

\section{Introduction\label{Sec_0}}

We consider Relative Kauffman Bracket Skein Module\footnote{$RKBSM$ was
defined in \cite{JHP} and it was noted there that, $RKBSM$ for $D^{2}\times
I $ with $2k$ points on the boundary is a free module with the basis
consisting of Catalan connections. For an extensive discussions of theory of
Kauffman bracket skein modules see \cite{JP}.} ($RKBSM$) of $P\times I,$
where $P$ is $m\times n$ parallelogram with $2\left( m+n\right) $ points on
the boundary arranged as it is shown in \textrm{Figure} $1.1,$ and $\left(
m+n\right) $-tangle $L\left( m,n\right) $ shown in \textrm{Figure} $1.2$
that we will refer to as $m\times n$\emph{-lattice crossing}. If $\mathfrak{%
Cat}_{m,n}$ denotes the set of all Catalan states for $P$ (crossingless
connections between boundary points) then $L\left( m,n\right) $ in $RKBSM$
of $P\times I$ can be uniquely written in the form $L\left( m,n\right)
=\sum_{C\in \mathfrak{Cat}_{m,n}}r\left( C\right) C,$ where $r\left(
C\right) \in \mathbb{Z}\left[ A^{\pm 1}\right] .$

\begin{figure}[h]
\centering
\begin{minipage}{.35\textwidth}
\centering
\includegraphics[width=\linewidth]{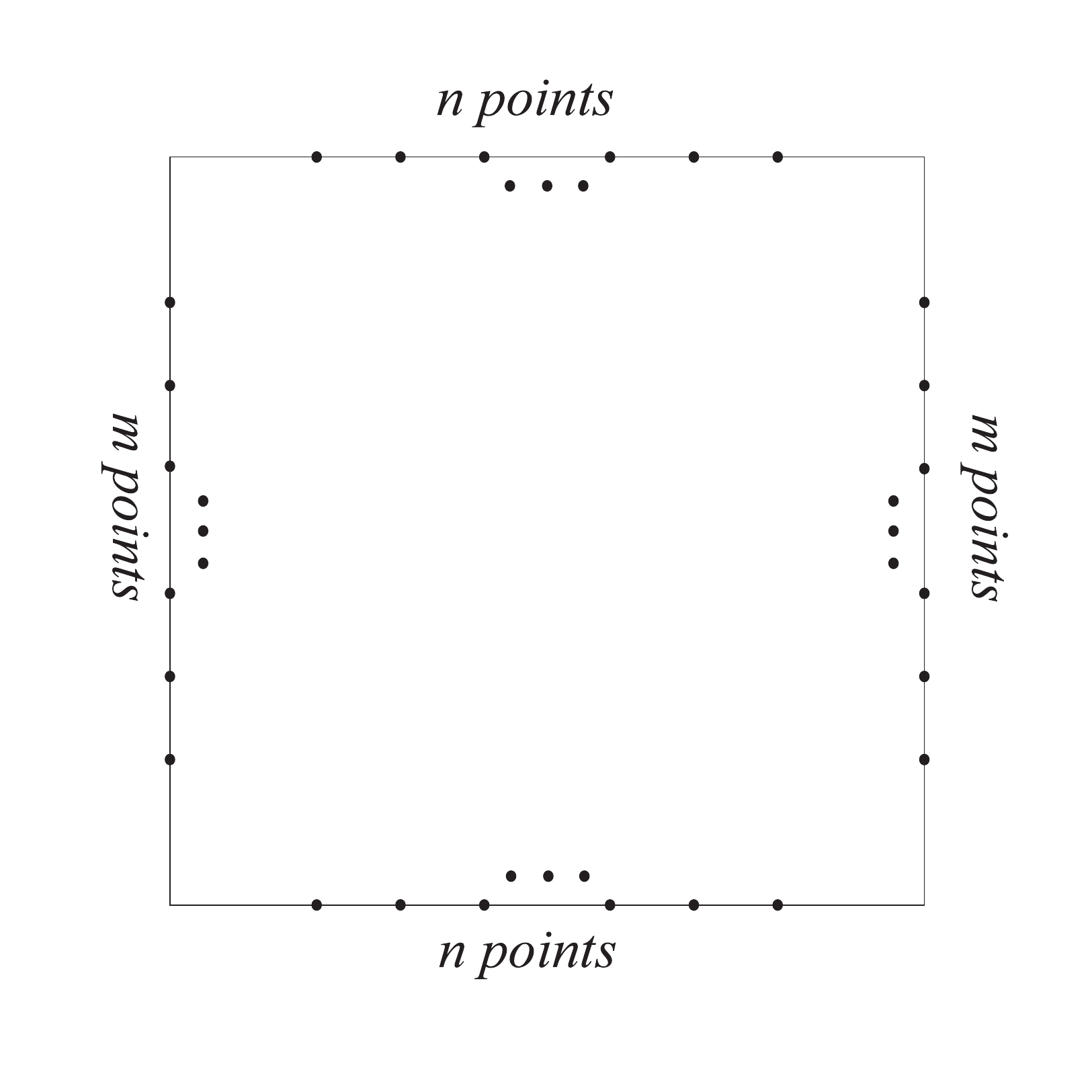}
\caption{Figure 1.1}
\label{fig:test1}
\end{minipage}\hfill 
\begin{minipage}{.35\textwidth}
\centering
\includegraphics[width=\linewidth]{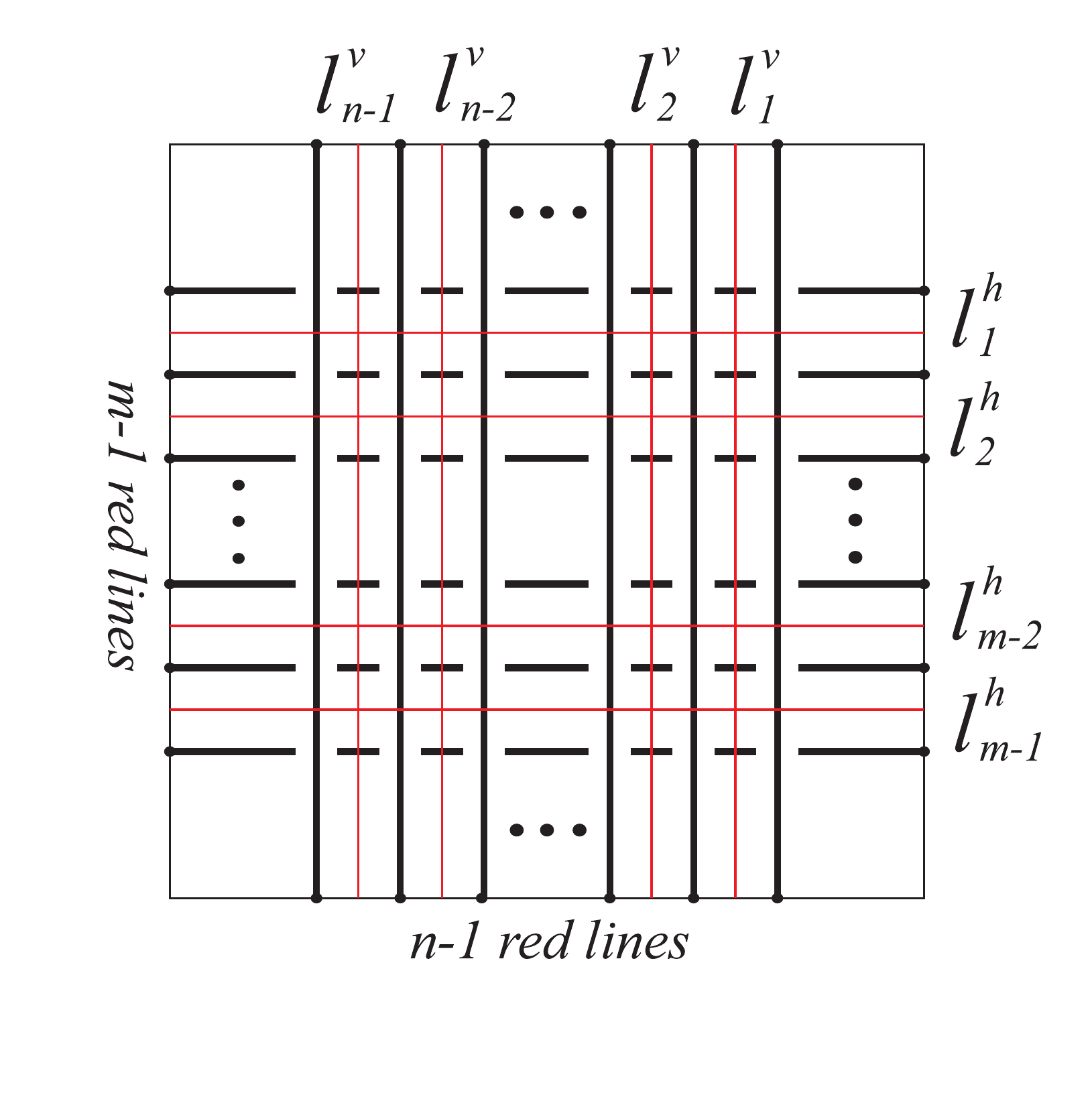}
\caption{Figure 1.2}
\label{fig:test2}
\end{minipage}\hfill 
\begin{minipage}{.2\textwidth}
\centering
\includegraphics[width=\linewidth]{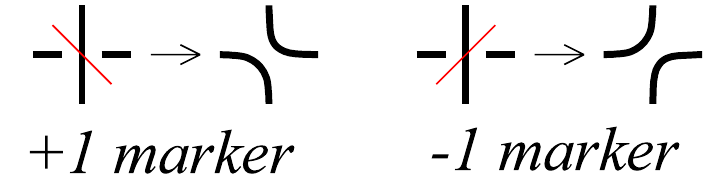}
\caption{Figure 1.3}
\label{fig:test3}
\end{minipage}
\end{figure}

Let $\mathcal{S}_{m,n}$ be the set of all Kauffman states (choices of
positive or negative markers for all crossings as shown in \textrm{Figure} $1
$.$3$), after applying skein relations we have in $RKBSM$ of $P\times I:$%
\begin{equation*}
L\left( m,n\right) =\sum_{s\in \mathcal{S}%
_{m.n}}A^{p(s)-n(s)}(-A^{2}-A^{-2})^{|s|}K\left( s\right) 
\end{equation*}%
where, following standard conventions, for each Kauffman state $s,$ $p(s)$
(resp. $n(s)$) denotes the number of crossings with positive (resp.
negative) marker, $|s|$ the number of trivial components of the diagram
obtained from $L\left( m,n\right) $ by smoothing crossings according to
markers defined by $s,$ and $K\left( s\right) $ denotes the diagram
corresponding to state $s$ after removing trivial components. In $RKBSM$ of $%
P\times I,$ we have%
\begin{equation*}
\sum_{C\in \mathfrak{Cat}_{m,n}}r\left( C\right) C=\sum_{s\in \mathcal{S}%
_{m.n}}A^{p(s)-n(s)}(-A^{2}-A^{-2})^{|s|}K\left( s\right) ,
\end{equation*}%
so, for each Catalan state $C\in \mathfrak{Cat}_{m,n}$ define%
\begin{eqnarray*}
R\left( C\right)  &=&\left\{ s\in \mathcal{S}_{m.n}\text{ }|\text{ }K\left(
s\right) \approx C\right\} ,\text{ where }\approx \text{ denotes isotopy of
diagrams modulo boundary, thus} \\
r\left( C\right)  &=&\left\{ 
\begin{array}{ccc}
0 & if & R\left( C\right) =\emptyset  \\ 
\sum_{s\in R\left( C\right) }A^{p(s)-n(s)}(-A^{2}-A^{-2})^{|s|} & if & 
R\left( C\right) \neq \emptyset 
\end{array}%
\right. .
\end{eqnarray*}%
We call $C$ a \emph{realizable Catalan state} if $R\left( C\right) \neq
\emptyset $ and denote by $\mathfrak{Cat}_{m,n}^{R}$ the set of all
realizable Catalan states. Analogously, we call $C$ a \emph{forbidden} \emph{%
Catalan state }if $R\left( C\right) =\emptyset $ and denote by $\mathfrak{F}%
_{m,n}=\mathfrak{Cat}_{m,n}\backslash \mathfrak{Cat}_{m,n}^{R}$ the set of
all forbidden Catalan states$.$ In this paper we will consider the following
problems:

\begin{description}
\item[1)] Which Catalan states $C$ are realizable as Kauffman states of $%
L\left( m,n\right) ?$

\item[2)] What is an explicit formula for $r\left( C\right) ?$

\item[3)] What is the number of elements of $\mathfrak{Cat}_{m,n}^{R}?$
\end{description}

In the second section we show which Catalan states $C$ can be realized by
Kauffman states of $L\left( m,n\right) $. In the third section, for a
Catalan state $C$ with no arc that starts and ends on the same
\textquotedblleft \emph{side\textquotedblright } of $L\left( m,n\right) $,
we give an explicit formula for the corresponding coefficient $r\left(
C\right) $. Finally, in the forth section, we obtain an explicit formula for
the number of realizable Catalan states.

\section{Realizable and Forbidden Catalan States of $L\left( m,n\right) $%
\label{Sec_1}}

\noindent Enumerating horizontal and vertical lines as in \textrm{Figure} $%
1.2,$ let $l_{1}^{h},$ $l_{2}^{h},$ $...,$ $l_{m-1}^{h}$ be \emph{horizontal
lines} and $l_{1}^{v},$ $l_{2}^{v},$ $...,$ $l_{n-1}^{v}$ be \emph{vertical
lines}. For a Catalan state $C\in \mathfrak{Cat}_{m,n},$ we let%
\begin{equation*}
d^{h}\left( C\right) =\max \left\{ \#\left( l_{i}^{h}\cap C\right) \text{ }|%
\text{ }i=1,2,\text{ }...,\text{ }m-1\right\} \text{ and }d^{v}\left(
C\right) =\max \left\{ \#\left( l_{i}^{v}\cap C\right) \text{ }|\text{ }%
i=1,2,\text{ }...,\text{ }n-1\right\} 
\end{equation*}%
where $\#\left( l_{i}^{\alpha }\cap C\right) $ \ is the minimal number of
intersections of $l_{i}^{\alpha }$ and $C$ for $\alpha =h$ or $\alpha =v.$

\begin{lemma}
\label{Lemma 2.1}Let $C\in \mathfrak{Cat}_{m,n}$ and assume that $%
d^{h}\left( C\right) >n$ or $d^{v}\left( C\right) >m$ then $C\in \mathfrak{F}%
_{m,n}.$
\end{lemma}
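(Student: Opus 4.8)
The plan is to prove the contrapositive implicitly: I assume $C$ is realizable and show $d^h(C) \le n$ and $d^v(C) \le m$. So suppose $C \in \mathfrak{Cat}_{m,n}^R$, meaning there exists a Kauffman state $s \in \mathcal{S}_{m,n}$ with $K(s) \approx C$. The crossing diagram $L(m,n)$ consists of $m$ horizontal strands and $n$ vertical strands meeting in an $m \times n$ grid of crossings, and each crossing has exactly two smoothings. My key observation is that any single horizontal line $l_i^h$ (placed between the $i$-th and $(i+1)$-th rows of crossings) is crossed transversally only by the $n$ vertical strands of $L(m,n)$. After smoothing all crossings according to $s$, the resulting diagram $K(s)$ — and hence the isotopic Catalan state $C$ — can only cross $l_i^h$ where the original strands do, so at most $n$ times.

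The central step is therefore a \emph{strand-counting / intersection bound}. First I would fix a horizontal line $l_i^h$ and observe that in the original lattice crossing $L(m,n)$ exactly $n$ arcs cross it (one per vertical line), giving $\#(l_i^h \cap L(m,n)) = n$. Next I would argue that each of the two admissible smoothings at a crossing is a local move supported away from $l_i^h$ (the smoothings happen at the lattice points, which lie strictly between consecutive horizontal lines), so smoothing crossings does not introduce any new intersections with $l_i^h$. Consequently the diagram of $K(s)$ meets $l_i^h$ in at most $n$ points, and since $\#(l_i^h \cap C)$ is defined as the \emph{minimal} number of intersections over the isotopy class (modulo boundary), we get $\#(l_i^h \cap C) \le n$ for every $i$, whence $d^h(C) \le n$. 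The identical argument with the roles of horizontal and vertical interchanged (now $m$ horizontal strands cross each vertical line $l_j^v$) yields $d^v(C) \le m$.

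Taking the contrapositive, if $d^h(C) > n$ or $d^v(C) > m$ then no Kauffman state $s$ can satisfy $K(s) \approx C$, so $R(C) = \emptyset$ and therefore $C \in \mathfrak{F}_{m,n}$, which is exactly the claim.

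The step I expect to be the main obstacle is justifying rigorously that smoothings do not increase the intersection number with $l_i^h$, and more subtly, that passing from the realized diagram $K(s)$ to the \emph{minimal} intersection number of the isotopy class $C$ preserves the bound. The first point requires care because, although each smoothing is locally supported, the global picture of $K(s)$ is a union of arcs and trivial circles that could in principle be drawn wigglingly across $l_i^h$; the cleanest remedy is to note that we only need \emph{one} representative diagram (namely the canonical smoothed diagram of $L(m,n)$ itself) achieving at most $n$ intersections, and since $\#(l_i^h \cap C)$ is defined as a minimum over all diagrams isotopic to $C$ rel boundary, this single witness immediately bounds it. This sidesteps any need to reason about arbitrary representatives and makes the minimality in the definition of $\#(l_i^\alpha \cap C)$ work in our favor rather than against us.
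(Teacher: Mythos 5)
Your proposal is correct and follows essentially the same route as the paper's own proof: both observe that the diagram obtained from $L\left( m,n\right) $ by smoothing according to any Kauffman state $s$ meets each horizontal (resp.\ vertical) separating line exactly $n$ (resp.\ $m$) times, since the smoothings are supported away from those lines, and then invoke the minimality in the definition of $\#\left( l_{i}^{\alpha }\cap C\right) $ to conclude $d^{h}\left( C\right) \leq n$ and $d^{v}\left( C\right) \leq m$ for any realizable $C$. Your version merely spells out the local-support and one-witness-suffices points that the paper leaves implicit.
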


\begin{proof}
Observe that for any Kauffman state $s$ of $L\left( m,n\right) ,$
corresponding diagram $L^{s}\left( m,n\right) $ obtained by smoothing
crossings according to choice of markers defined by $s$ cuts horizontal and
vertical separating lines precisely $n$ times and $m$ times. Hence for the
Catalan state $C\left( s\right) $ obtained from $L^{s}\left( m,n\right) $ we
must have $d^{h}\left( C\right) \leq n$ and $d^{v}\left( C\right) \leq m$.
\end{proof}

\smallskip 

We say that a Catalan state $C$ satisfies \emph{horizontal-line condition,} $%
H\left( m,n\right) $ if $d^{h}\left( C\right) \leq n$ and, correspondingly, $%
C$ satisfies \emph{vertical-line condition,} $V\left( m,n\right) $ if $%
d^{v}\left( C\right) \leq m.$ We show that if $C\in \mathfrak{Cat}_{m,n}$
satisfies both conditions $H\left( m,n\right) $ and $V\left( m,n\right) $
then $C$ is a realizable Catalan state. The following lemma gives sufficient
conditions for a Catalan state $C\in \mathfrak{Cat}_{m,n}$ to meet condition 
$V\left( m,n\right) $ or/and $H\left( m,n\right) $.

\begin{lemma}
\label{Lemma 2.2}For Catalan state $C,$

\begin{description}
\item[$\left( 1\right) $] if $C$ has no returns on the top horizontal
boundary then $C$ satisfies condition $V\left( m,n\right) ;$

\item[$\left( 2\right) $] if $C,$ in addition, has no returns on the right
side, then $C$ satisfies both $H\left( m,n\right) $ and $V\left( m,n\right) $%
.
\end{description}
\end{lemma}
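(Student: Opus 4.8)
The plan is to argue entirely on the level of the crossingless matching $C$, using the standard fact that for a non-crossing system of arcs the minimal intersection number $\#(l_j^v\cap C)$ equals the number of arcs of $C$ whose two endpoints lie on opposite sides of the line: an arc with both endpoints on one side can be isotoped off the line, a separated arc must meet it, and by non-crossingness all of this can be arranged at once with each separated arc meeting the line exactly once. Label the boundary points $t_1,\dots,t_n$ on the top, $b_1,\dots,b_n$ on the bottom, $\ell_1,\dots,\ell_m$ on the left, $r_1,\dots,r_m$ on the right. A vertical line $l_j^v$ splits $P$ into a left piece $P_L$, carrying $t_1,\dots,t_j$, $b_1,\dots,b_j$ and all the $\ell$'s, and a right piece $P_R$ carrying $t_{j+1},\dots,t_n$, $b_{j+1},\dots,b_n$ and all the $r$'s. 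Thus $\#(l_j^v\cap C)$ is the number of arcs running from $P_L$ to $P_R$, and proving $(1)$ amounts to bounding this by $m$ for every $j$.

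The crux is a dichotomy forced by the no-returns hypothesis. List the crossing arcs $\alpha_1,\dots,\alpha_c$ in decreasing order of the height at which they meet $l_j^v$. Since the $\alpha_i$ are pairwise disjoint, they meet $\partial P_L$ in the same order as they meet the line; reading $\partial P_L$ from the top endpoint of $l_j^v$ into $P_L$, the top points $t_j,\dots,t_1$ are met before any $\ell$ or $b$ point, so the crossing arcs having a top endpoint on the left form an initial block $\alpha_1,\dots,\alpha_s$. Symmetrically, those having a top endpoint on the right form an initial block $\alpha_1,\dots,\alpha_{s'}$. If both $s\ge 1$ and $s'\ge 1$, then $\alpha_1$ would join a top point of $P_L$ to a top point of $P_R$, i.e. it would be a return on the top, contradicting the hypothesis. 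Hence $\min(s,s')=0$: either no crossing arc has a top endpoint on the left, or none has a top endpoint on the right.

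To finish $(1)$, suppose (the two cases being symmetric) that no crossing arc has a top endpoint in $P_R$. Then the $n-j$ top points $t_{j+1},\dots,t_n$ of $P_R$ lie on no crossing arc, so each is matched inside $P_R$; since $C$ has no top returns and they cannot reach the top points of $P_L$ without crossing the line, each is matched to one of the $m+(n-j)$ non-top points of $P_R$ (namely the $r$'s and $b_{j+1},\dots,b_n$), consuming $n-j$ of them. Every crossing arc meets $P_R$ in a non-top point, and these are distinct, so the crossing arcs inject into the remaining $m+(n-j)-(n-j)=m$ non-top points of $P_R$; hence $\#(l_j^v\cap C)\le m$ for all $j$, which is exactly $V(m,n)$, proving $(1)$. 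For $(2)$, condition $V(m,n)$ is already supplied by $(1)$, while $H(m,n)$ follows from the mirror image of the same argument: with the right side, the horizontal lines, and the count $n$ playing the roles of the top side, the vertical lines, and $m$, the hypothesis ``no returns on the right'' yields $\#(l_i^h\cap C)\le n$ for every $i$.

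I expect the main obstacle to be the dichotomy of the second paragraph, since it is the one place where the no-returns hypothesis is genuinely used and where the non-crossing structure (the ordering of the crossing arcs and the fact that the top endpoints occur as an initial block on each side) must be pinned down carefully; the reduction to separated pairs and the final injective count are then routine. A secondary point to state cleanly is that $\#(l_j^v\cap C)$ is the \emph{minimal} intersection number, so that internal arcs may legitimately be discarded.
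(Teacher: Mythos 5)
Your proof is correct and takes essentially the same route as the paper's: both rest on the dichotomy that, because $C$ has no top returns, the top points on at most one side of a vertical line can belong to arcs crossing that line, followed by the count that the $2k$ points used up by the non-crossing top arcs on the other side leave only $m$ available endpoints for crossing arcs, and both obtain $(2)$ by applying $(1)$ to the right side. Your ordering/initial-block argument is just a more explicit justification of the planarity step the paper leaves implicit.
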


\begin{proof}
For $\left( 1\right) ,$ we argue by case distinction. Let $x_{1},...,x_{n}$
be boundary points on top of $C$, and $\ell $ be a separating line between $%
x_{k}$ and $x_{k+1}$ (see \textrm{Figure} $2.1$).

If $x_{k+i}$th connection cuts $\ell $ for some $1\leq i\leq n-k,$ then all $%
x_{1},...,x_{k}$ connections do not cut $\ell $. Since there are $m+2k$
points on the right of $\ell $ and among them at least $2k$ points yield
connections in $C$ without cutting $\ell $, it follows that there are at
most $m$ points on the right of $\ell $ which can form connections with
points on the left of $\ell $. Using similar reasoning, if $1\leq i\leq k$
then at most $m$ points on the left of $\ell $ can form connections with
points on the right of $\ell $. Therefore, statement $\left( 1\right) $
follows from basic counting.

If each $x_{i}$ connection does not cut $\ell $ for all $i=1,$ $2,$ $...,$ $%
n,$ we observe that, in particular, $x_{1},...,x_{k}$ connections do not cut 
$\ell .$ Thus, as in the previous case, we argue that at most $m$ points on
the left of $\ell $ can be connected with points on the right of $\ell $.

Statement $\left( 2\right) ,$ follows directly from $(1),$ since $(1)$ can
be applied to any side of the rectangle.
\end{proof}

\smallskip 
\begin{figure}[h]
\centering
\begin{minipage}{.35\textwidth}
\centering
\includegraphics[width=\linewidth]{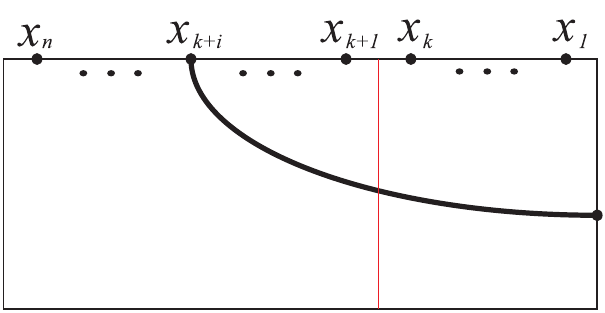}
\caption{Figure 2.1}
\label{fig:test4}
\end{minipage}\qquad 
\begin{minipage}{.27\textwidth}
\centering
\includegraphics[width=\linewidth]{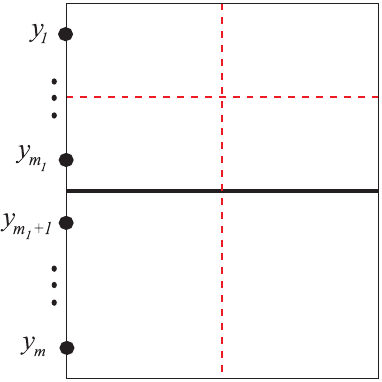}
\caption{Figure 2.2}
\label{fig:test5}
\end{minipage}
\end{figure}

For two Catalan states $C_{1}\in \mathfrak{Cat}_{m_{1},n}$ and $C_{2}\in 
\mathfrak{Cat}_{m_{2},n}$ with no returns on the bottom of the first and the
top for the second one defines an operation $\ast _{v}$ of vertical
composition of Catalan states (see \textrm{Figure} $2.2$). The operation $%
\ast _{v}$ allows us to define Catalan state $C=C_{1}\ast _{v}C_{2},$ for
which the number of intersections with its vertical lines can easily be
counted provided that the number of intersections with vertical lines of the
factor states is known. Analogously, one defines horizontal composition $%
\ast _{h}$ of Catalan states.

\begin{lemma}
\label{Lemma 2.3}Let $C_{1}\in \mathfrak{Cat}_{m_{1},n}$ and $C_{2}\in 
\mathfrak{Cat}_{m_{2},n}$ and assume that bottom of $C_{1}$ and top of $%
C_{2} $ are no return sides respectively $($see \textrm{Figure} $2.2)$. Then
the vertical composition $C=C_{1}\ast _{v}C_{2}$ of states $C_{1}$ and $%
C_{2} $ is a Catalan state with the following properties$:$

\begin{description}
\item[$\left( 1\right) $] Every vertical line $\ell $ intersects $C $ at
most $m=m_{1}+m_{2}$ times and every horizontal line $\ell ^{\prime }$
either in $C_{1}$ or $C_{2}$ cuts $C$ the same number of times as in $C_{1}$
or $C_{2}$ respectively$.$

\item[$\left( 2\right) $] Catalan states $C_{1}$ and $C_{2}$ satisfy
conditions $H\left( m_{i},n\right) $ and $V\left( m_{i},n\right) $, for $%
i=1,2,$ respectively if and only if $C$ satisfies conditions $H\left(
m,n\right) $ and $V\left( m,n\right) $.
\end{description}
\end{lemma}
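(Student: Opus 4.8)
The plan is to reduce everything to one elementary fact: for a crossingless matching in a disk and a separating line, the minimal number of intersections equals the number of arcs having one endpoint on each side of the line. First I would record how the separating lines of $C=C_{1}\ast_{v}C_{2}$ organize themselves. Writing $m=m_{1}+m_{2}$, the $m-1$ horizontal lines of $C$ split into the $m_{1}-1$ horizontal lines lying in the $C_{1}$-block, the $m_{2}-1$ horizontal lines lying in the $C_{2}$-block, and the single gluing line $\ell_{0}:=l_{m_{1}}^{h}$ (the identified bottom of $C_{1}$ and top of $C_{2}$); the vertical lines of $C$ are exactly the vertical lines of $C_{1}$ and of $C_{2}$ sitting over the same column gaps.

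For the horizontal part of $(1)$ I would note that $\ast_{v}$ does not alter the interiors of $C_{1}$ and $C_{2}$: for a horizontal line $\ell'$ in the $C_{1}$-block, the part of the diagram lying above $\ell'$ is literally the part of $C_{1}$ above $\ell'$, since gluing only touches the bottom boundary of $C_{1}$, which is below $\ell'$. Hence the set of above-$\ell'$ boundary points, and for each such point whether its strand escapes downward through $\ell'$ or closes up above it, is identical for $C$ and for $C_{1}$; by the elementary fact this gives $\#(\ell'\cap C)=\#(\ell'\cap C_{1})$, and symmetrically for lines in the $C_{2}$-block. For the vertical part I would use a representative argument: placing $C_{1}$ and $C_{2}$ in positions realizing $\#(\ell\cap C_{1})$ and $\#(\ell\cap C_{2})$ and gluing them gives one admissible diagram of $C$ meeting $\ell$ in $\#(\ell\cap C_{1})+\#(\ell\cap C_{2})$ points, so minimality yields the subadditivity $\#(\ell\cap C)\le\#(\ell\cap C_{1})+\#(\ell\cap C_{2})$. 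Since the bottom of $C_{1}$ and the top of $C_{2}$ are no-return sides, Lemma \ref{Lemma 2.2} applied to those sides gives $\#(\ell\cap C_{i})\le d^{v}(C_{i})\le m_{i}$, whence $\#(\ell\cap C)\le m_{1}+m_{2}=m$, proving $(1)$.

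For $(2)$ I would first observe that the vertical conditions are automatic on both sides: $C_{1}$ and $C_{2}$ satisfy $V(m_{1},n)$ and $V(m_{2},n)$ by Lemma \ref{Lemma 2.2}, and $C$ satisfies $V(m,n)$ by the bound just established. So the equivalence reduces to its horizontal half. Using the horizontal invariance from $(1)$, the maxima over the $C_{1}$- and $C_{2}$-blocks compute $d^{h}(C_{1})$ and $d^{h}(C_{2})$, giving $d^{h}(C)=\max\{d^{h}(C_{1}),\,d^{h}(C_{2}),\,\#(\ell_{0}\cap C)\}$. The key point is $\#(\ell_{0}\cap C)\le n$: in the natural glued position $C$ meets $\ell_{0}$ exactly at the $n$ identified points $y_{1},\dots,y_{n}$, because the no-return hypotheses forbid any arc from doubling back at the gluing line, and minimality then gives $\#(\ell_{0}\cap C)\le n$. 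Since this third term never exceeds $n$, $d^{h}(C)\le n$ holds if and only if both $d^{h}(C_{1})\le n$ and $d^{h}(C_{2})\le n$; that is, $C$ meets $H(m,n)$ iff $C_{1}$ and $C_{2}$ meet $H(m_{1},n)$ and $H(m_{2},n)$. Combining with the automatic vertical conditions yields the asserted equivalence.

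The main obstacle I anticipate is the careful bookkeeping of minimal intersection numbers under isotopy rel $\partial P$: one must justify that restriction to a block preserves the crossing count for horizontal lines (so that equality, not merely an inequality, is obtained from identical above-$\ell'$ data), while for vertical lines only the inequality coming from gluing minimal representatives is available — which is precisely all that is needed. Once the elementary characterization of minimal intersection number for crossingless matchings is in place, the no-return hypotheses do exactly two jobs — bounding the factor counts by $m_{i}$ through Lemma \ref{Lemma 2.2}, and pinning the gluing-line count at $\le n$ — and the remainder is the combinatorial accounting above.
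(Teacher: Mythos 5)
Your proposal is correct and follows essentially the same route as the paper: bound vertical-line intersections by combining Lemma \ref{Lemma 2.2} on the factors with subadditivity under gluing, observe that horizontal lines interior to a block retain their counts because the composition happens along no-return sides, and deduce $(2)$ from $(1)$. Your explicit treatment of the gluing line $\ell_{0}$ (showing its count is at most $n$) is a detail the paper leaves implicit, but it does not change the argument.
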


\begin{proof}
Since $C_{1}$ and $C_{2}$ have no return sides on the bottom and the top
respectively, vertical composition $C=C_{1}\ast _{v}C_{2}$ of states $C_{1}$
and $C_{2}$ (as it is shown in \textrm{Figure} $2.2$) is a Catalan state.
Furthermore, if $\ell $ is a vertical line in $C,$ then from Lemma \ref%
{Lemma 2.2}$(1)$ it follows that the number of times that $\ell $ cuts $C$
is bounded above by the number $m_{1}+m_{2}=m$. Finally, if $\ell ^{\prime }$
is a horizontal line either in $C_{1}$ or $C_{2}$ then $\ell ^{\prime }$ is
also a separating line in the composed state $C$ and, since $C_{1}$ and $%
C_{2}$ are composed along sides with no returns, the number of times $\ell
^{\prime }$ cuts $C$ is identical as the number of times $\ell ^{\prime }$
cuts either $C_{1}$ or $C_{2}$. This finishes our proof of $\left( 1\right) .
$

Statement $\left( 2\right) $ follows from $(1)$.
\end{proof}

\smallskip 

For Catalan states $C$ satisfying conditions $H\left( m,n\right) $ and $%
V\left( m,n\right) ,$ we apply induction on $m+n$ to prove that such states
could be obtained as Kauffman states of $L\left( m,n\right) .$ An important
step in our inductive proof is an observation that Catalan states with $%
d^{h}\left( C\right) =n$ (or $d^{v}\left( C\right) =m$) could be obtained as
a vertical (horizontal) composition of Catalan states.

\begin{lemma}[Split Lemma]
\label{Lemma 2.4}Let $C\in \mathfrak{Cat}_{m,n}$ and assume that $C$
satisfies conditions $H\left( m,n\right) $ and $V\left( m,n\right) ,$ and $%
d^{h}\left( C\right) =n$. Then

\begin{description}
\item[$\left( 1\right) $] $C=C_{1}\ast _{v}C_{2},$ where $C_{1}\in \mathfrak{%
Cat}_{m_{1},n}$, $C_{2}\in \mathfrak{Cat}_{m_{2},n}$, $m_{1}+m_{2}=m,$ and

\item[$\left( 2\right) $] Catalan states $C_{1}$ and $C_{2}$ satisfies
conditions $H\left( m_{i},n\right) $ and $V\left( m_{i},n\right) $, for $%
i=1,2,$ respectively.
\end{description}
\end{lemma}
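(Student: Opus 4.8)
The plan is to prove part $(1)$ --- the geometric splitting --- by hand, and then obtain part $(2)$ as an immediate consequence of Lemma \ref{Lemma 2.3}$(2)$. The whole argument hinges on the hypothesis $d^{h}(C)=n$: it guarantees a horizontal separating line that $C$ meets the \emph{maximal} possible number of times, and it is precisely this maximality that lets us cut $C$ into two \emph{genuine} $m_{i}\times n$ Catalan states, rather than states with too few points on the cut edge.

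First I would fix a horizontal line realizing the maximum, say $\ell =l_{i}^{h}$ with $\#(\ell \cap C)=d^{h}(C)=n$, where $1\leq i\leq m-1$, so that $\ell$ separates $P$ into a top rectangle $P_{1}$ (of type $m_{1}\times n$, $m_{1}=i$) and a bottom rectangle $P_{2}$ (of type $m_{2}\times n$, $m_{2}=m-i$). Next I would isotope $C$ rel $\partial P$ into minimal position with respect to $\ell$, so that $\ell$ meets $C$ transversally in exactly $n$ points. The key local observation is that in minimal position no arc of $C$ crosses $\ell$ more than once: an arc returning to the same side of $\ell$ would cross it at least twice, and an innermost such bump --- which bounds a half-disc meeting no other arc, by the crossingless (planar) structure of $C$ --- could be pushed across $\ell$, lowering the intersection count and contradicting minimality. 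Hence every arc meeting $\ell$ has its two endpoints on opposite sides and crosses $\ell$ exactly once.

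Cutting $C$ along $\ell$ then yields a diagram $C_{1}$ in $P_{1}$ and a diagram $C_{2}$ in $P_{2}$. Because $\ell$ carries exactly $n$ intersection points, the cut edge of each $P_{j}$ acquires exactly $n$ new boundary points; together with the $n$ points on the outer edge and the $2m_{j}$ side points this gives $2(m_{j}+n)$ boundary points, so $C_{1}\in \mathfrak{Cat}_{m_{1},n}$ and $C_{2}\in \mathfrak{Cat}_{m_{2},n}$ are bona fide Catalan states. Moreover, since each arc meets $\ell$ at most once, no arc of $C_{1}$ has both endpoints on $\ell$, and likewise for $C_{2}$; that is, the bottom of $C_{1}$ and the top of $C_{2}$ are no-return sides. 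Thus the hypotheses of the vertical composition are met and $C=C_{1}\ast _{v}C_{2}$, proving $(1)$.

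Finally, for $(2)$ I would simply invoke Lemma \ref{Lemma 2.3}$(2)$: having written $C=C_{1}\ast _{v}C_{2}$ with $C_{1},C_{2}$ Catalan states glued along no-return sides, and knowing by assumption that $C$ satisfies both $H(m,n)$ and $V(m,n)$, the stated equivalence forces $C_{1}$ and $C_{2}$ to satisfy $H(m_{i},n)$ and $V(m_{i},n)$ for $i=1,2$. I expect the main obstacle to be the careful justification of the middle paragraph --- namely that minimal position really produces a \emph{clean} cut, with no returns and with exactly $n$ points on the new edge --- since this is where the planarity of $C$ and the maximality $d^{h}(C)=n$ both get used; the boundary-point counting and the appeal to Lemma \ref{Lemma 2.3} are then routine.
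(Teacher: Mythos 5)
Your proof is correct and follows essentially the same route as the paper: choose a horizontal separating line realizing $d^{h}(C)=n$, split $C$ along it into two Catalan states glued along no-return sides, and deduce part $(2)$ from Lemma \ref{Lemma 2.3}$(2)$. Your innermost-bump argument for why minimal position forces each arc to cross $\ell$ at most once (hence a clean, return-free cut with exactly $n$ points on the new edge) is precisely the detail that the paper's one-sentence assertion leaves implicit.
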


\begin{proof}
Since $d^{h}\left( C\right) =n$, there is a horizontal separating line $\ell 
$ that cuts $C$ precisely $n$ times. Splitting $C$ along $\ell $ yields a
pair of Catalan states $C_{1}\in \mathfrak{Cat}_{m_{1},n}$ and $C_{2}\in 
\mathfrak{Cat}_{m_{2},n}$, where $m_{1}+m_{2}=m.$ Since $n$ is the number of
intersections of $\ell $ with Catalan state $C,$ it follows that $C_{1}$ has
the bottom side with no returns and $C_{2}$ has the top side with no
returns. Therefore, state $C$ can be expressed as $C=C_{1}\ast _{v}C_{2}$.
Since $C$ satisfies conditions $H\left( m,n\right) $ and $V\left( m,n\right)
,$ it follows from Lemma \ref{Lemma 2.3}$\left( 2\right) $ that each $C_{i}$
satisfies conditions $H\left( m_{i},n\right) $ and $V\left( m_{i},n\right) $%
, for $i=1,2,$ respectively.
\end{proof}

\smallskip

Obviously, the statement of \emph{Split Lemma} holds when $d^{v}\left(
C\right) =m$. In such a case, $C=C_{1}\ast _{h}C_{2},$ where $C_{1}\in 
\mathfrak{Cat}_{m,n_{1}}$, $C_{2}\in \mathfrak{Cat}_{m,n_{2}}$, $%
n_{1}+n_{2}=n,$ and each $C_{i}$ satisfies conditions $H\left(
m,n_{i}\right) $ and $V\left( m,n_{i}\right) $, for $i=1,2,$ respectively.
Now, we state and prove the main result for this section.

\begin{theorem}
\label{Theorem 2.5}Assume that $C$ satisfies conditions $H\left( m,n\right) $
and $V\left( m,n\right) $, then $C$ can be obtained as a Kauffman state of $%
L\left( m,n\right).$
\end{theorem}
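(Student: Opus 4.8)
The plan is to induct on $m+n$. For the base I would take $\min(m,n)=0$, where $L(m,n)$ has no crossings and conditions $H(m,n)$, $V(m,n)$ force $C$ to be the trivial straight-strand state, realized by the empty choice of markers, together with $m=n=1$, where $L(1,1)$ is a single crossing and its two Kauffman states are precisely its two Catalan states; both are realizable by inspection.

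For the inductive step the clean cases are $d^h(C)=n$ and $d^v(C)=m$. If $d^h(C)=n$, the Split Lemma (Lemma \ref{Lemma 2.4}) gives $C=C_1\ast_v C_2$ with $C_i\in\mathfrak{Cat}_{m_i,n}$, $m_1+m_2=m$, $m_i\ge 1$, each satisfying $H(m_i,n)$ and $V(m_i,n)$. Since $L(m,n)$ is the vertical stacking of $L(m_1,n)$ over $L(m_2,n)$ and $m_i+n<m+n$, the induction hypothesis supplies Kauffman states $s_1,s_2$ realizing $C_1,C_2$; their union $s=s_1\cup s_2$ is a Kauffman state of $L(m,n)$ whose smoothing equals $C_1\ast_v C_2=C$ after deleting trivial circles, so $s$ realizes $C$. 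The case $d^v(C)=m$ is symmetric, using the horizontal Split Lemma and horizontal stacking.

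This leaves the case $d^h(C)<n$ and $d^v(C)<m$, which I expect to be the main obstacle: no separating line meets $C$ the maximal number of times, so $C$ is not an honest vertical or horizontal composition. My approach would be to split $C$ anyway along a separating line $\ell=\ell_i^h$ into $C_1\in\mathfrak{Cat}_{i,n}$ and $C_2\in\mathfrak{Cat}_{m-i,n}$, first isotoping $C$ so that it meets $\ell$ in exactly $n$ points on the lattice edges, which is possible since the number of intersections changes only in pairs and $n-\#(\ell\cap C)$ is even. After cutting, the points of $\ell$ not used by a genuine through-strand of $C$ appear as matched returns on the bottom of $C_1$ and the top of $C_2$; when the realizing Kauffman states of the two pieces (from the induction hypothesis) are stacked, these matched returns close up into trivial circles, so the recombined state realizes $C$ exactly.

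The delicate point, and the heart of the argument, is that the two pieces must themselves satisfy the capacity conditions so that the induction hypothesis applies. Condition $H$ passes to each piece for free, since a horizontal line meets a piece no more often than it meets $C$, hence at most $d^h(C)<n$ times; the real issue is condition $V$, namely that every vertical line meets the top $i$ rows at most $i$ times and the bottom $m-i$ rows at most $m-i$ times. This can fail for a careless choice of $\ell$, since a thin piece cannot carry a deep vertical nesting, so the crux is to show that the strict inequalities $d^h(C)<n$ and $d^v(C)<m$ guarantee the existence of a separating line---horizontal or vertical---along which both pieces satisfy $V$ (respectively $H$). I would attempt this by a monotonicity and counting argument on the function $i\mapsto d^v(\text{top }i\text{ rows})$ and its vertical analogue, exploiting that their extreme values are controlled by $d^v(C)<m$ and $d^h(C)<n$; establishing the existence of such a good cut is the step I expect to require the most care, and should it fail to exist in general the fallback would be a direct construction routing the arcs of $C$ along the grid using the capacity guaranteed by $d^h\le n$ and $d^v\le m$.
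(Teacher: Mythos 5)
Your induction scheme, base cases, and the treatment of the case $d^h(C)=n$ or $d^v(C)=m$ (Split Lemma plus stacking of the Kauffman states supplied by the induction hypothesis) coincide with the paper's proof. The genuine gap is in the remaining case $d^h(C)<n$ and $d^v(C)<m$: you reduce the whole argument to the existence of a separating line along which both pieces of your padded splitting satisfy the capacity conditions, and you explicitly leave that existence unproved, offering only a hoped-for ``monotonicity and counting argument'' and a vague fallback. As you yourself observe, condition $V$ can fail for a badly chosen horizontal cut (a thin piece cannot carry a deep vertical nesting), and nothing in your text exhibits a cut for which it does not fail; so the proof is incomplete precisely at the step that carries all the difficulty.

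The paper closes this case differently and more locally (Lemma \ref{Lemma 2.6}). Since $d^h(C)<n$, not all $n$ top boundary points can be joined to points lying below the first horizontal line $l_1^h$ (that would force $n$ intersections with $l_1^h$); hence the top side carries either a return or a corner connection, i.e.\ a top point joined to the topmost point of the left or right side. One then writes down explicit markers for the $n$ crossings of the first row that realize exactly that return or corner connection (\textrm{Figure} $2.4$) and peels the row off, leaving a state $C'\in\mathfrak{Cat}_{m-1,n}$. The horizontal condition for $C'$ is immediate, and the vertical one follows from parity: $d^v(C)<m$ forces $d^v(C)\le m-2$, so after deleting one row every vertical line still meets $C'$ at most $m-1$ times, and the induction hypothesis applies to $C'$. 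If you want to salvage your splitting strategy, the cut $i=1$ combined with this return/corner-connection observation is essentially what makes it work; without some such concrete input, the ``good cut'' your argument requires has not been established.
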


\begin{proof}
We proceed by induction on $m+n$. Clearly, statement is vacuously true for $%
m=n=0$, and it is obvious when $m=n=1$. We assume that statement is true for
all pairs $\left( m,n\right) $ such that $m,$ $n>0$ and $m+n\leq k,$ where $%
k\geq 2.$ We show that statement is true for $m,$ $n>0$ and $m+n=k+1$ and
WLOG we can assume $m\geq 2$.

\noindent Let $C\in \mathfrak{Cat}_{m,n}$ and $C$ satisfies conditions $%
H\left( m,n\right) $ and $V\left( m,n\right) $. We have the following cases

\begin{description}
\item[1)] $d^{h}\left( C\right) =n$ or $d^{v}\left( C\right) =m$
\end{description}

If $d^{h}\left( C\right) =n,$ then applying \emph{Split Lemma,} we have $%
C=C_{1}\ast _{v}C_{2}$, where $C_{1}\in \mathfrak{Cat}_{m_{1},n}$, $C_{2}\in 
\mathfrak{Cat}_{m_{2},n}$ $\left( m_{1}+m_{2}=m\right) ,$ and each $C_{i}$
satisfies conditions $H\left( m_{i},n\right) $ and $V\left( m_{i},n\right) $%
, for $i=1,$ $2$. Since $m_{i}+n\leq k$, applying induction hypothesis, we
obtain corresponding Kauffman states $s_{1}$ and $s_{2}$ for $L\left(
m_{1},n\right) $ and $L\left( m_{2},n\right) $ respectively such that $%
C_{1}\approx K\left( s_{1}\right) $ and $C_{2}\approx K\left( s_{2}\right) $%
. Let $s$ be Kauffman state obtained by putting markers according to $s_{1}$
in the first $m_{1}$ rows of $L\left( m,n\right) $ and according to $s_{2}$
in the remaining $m-m_{1}=m_{2}$ rows of $L\left( m,n\right) $. Clearly, for
Kauffman state $s,$ we have $C\approx K\left( s\right) $.

Proof in the case when $d^{v}\left( C\right) =m$ is analogous.

\begin{description}
\item[2)] $d^{h}\left( C\right) <n$ and $d^{v}\left( C\right) <m,$ that is,
each horizontal line cuts $C$ less than $n$ times and each vertical line
cuts $C$ less than $m$ times.
\end{description}

In this case we will construct a Kauffman state $s$ such that $C\approx
K\left( s\right) $ using an algorithm given in the following lemma.

\begin{lemma}
\label{Lemma 2.6}Assume that $d^{h}\left( C\right) <n$ and $d^{v}\left(
C\right) <m$, then $C$ can be obtained as a Kauffman state of $L\left(
m,n\right) .$
\end{lemma}

\begin{proof}
By assumption, each horizontal line cuts $C$ less than $n$ times and each
vertical line cuts $C$ less than $m$ times, so the top boundary side of $C$
must have at least one return (see \textrm{Figure} $2.3$ (left)), or at
least one corner connection (see \textrm{Figure} $2.3$ (right)). If $C$ has
no corner connection, we start construction by defining Kauffman state $s_{1}
$ (first row of desired Kauffman state $s$) for the returning connection of $%
C$ between pair of points $x_{i}$ and $x_{i+1}$ on the top of $C$ with
minimal index $i$, $i=1,2,...,n-1$ as shown in \textrm{Figure} $2.4$ (left).
If there is a corner connection, we take as Kauffman state $s_{1}$ (first
row of Kauffman state $s$) the state shown in \textrm{Figure} $2.4$ (right).

\begin{figure}[h]
\centering
\begin{minipage}{.45\textwidth}
\centering
\includegraphics[width=\linewidth]{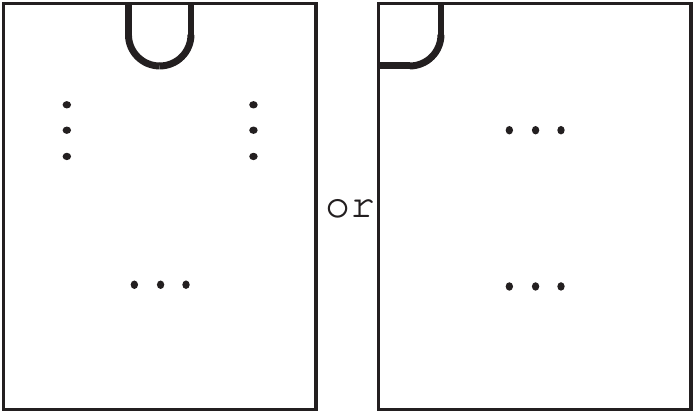}
\caption{Figure 2.3}
\label{fig:test6}
\end{minipage}\qquad 
\begin{minipage}{.45\textwidth}
\centering
\includegraphics[width=\linewidth]{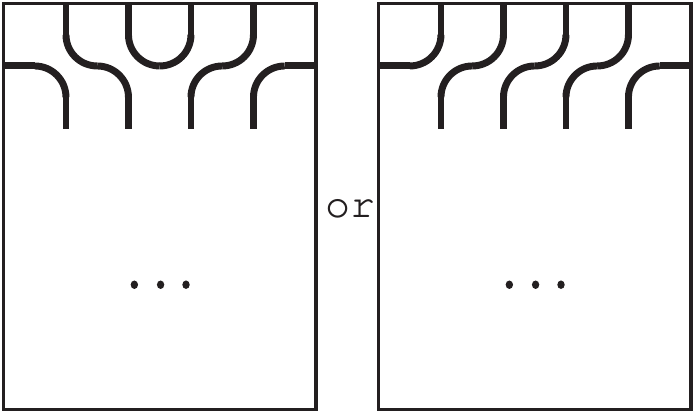}
\caption{Figure 2.4}
\label{fig:test7}
\end{minipage}
\end{figure}

Using isotopy, we deform state $C$ into an equivalent state $\widetilde{C}$
with top as shown in \textrm{Figure} $2.4$ and, for which all remaining
connections form $\left( \left( m-1\right) +n\right) $-Catalan state$.$
Since every vertical line cuts $C$ less than $m$ times, we observe that it
must cut each vertical line no more then $m-2$ times. It follows that $%
\left( \left( m-1\right) +n\right) $-Catalan state $C^{\prime }\in \mathfrak{%
Cat}_{m-1,n}$ obtained from $\widetilde{C}$ by removing its top (see \textrm{%
Figure} $2.4$) cuts all vertical separating lines no more than $m-1$ times.
In particular, Catalan state $C^{\prime }$ satisfies conditions $H\left(
m-1,n\right) $ and $V\left( m-1,n\right) $ and one may apply induction
hypothesis for the state $C^{\prime }$. Therefore, there is a Kauffman state 
$s^{\prime }$ of $L\left( m-1,n\right) $ such that $K\left( s^{\prime
}\right) =C^{\prime }$. In order to complete our construction, we take for $s
$ the state obtained by putting markers according to state $s_{1}$ in the
first row of $L\left( m,n\right) $ and putting markers according to $%
s^{\prime }$ in its remaining $\left( m-1\right) $ rows. For the choice of
markers determined by $s$, we clearly have $K\left( s\right) \approx C.$
\end{proof}

\smallskip

Proof of Lemma \ref{Lemma 2.6} also completes our proof of Theorem \ref%
{Theorem 2.5}.
\end{proof}

\smallskip

\section{Coefficients of Realizable Catalan States with No Returns in $RKBSM$%
\label{Sec_3}}

In this section we prove result concerning coefficients of realizable
Catalan states for $L\left( m,n\right) $ for the case of Catalan states with
no returns. This result was proved in $1992$ by J. Hoste and J. H. Przytycki%
\footnote{%
Authors of \cite{H-P-2} didn't publish this result. We were informed by
Mustafa Hajij that a related formula was noted by S. Yamada \cite{SY},\cite%
{Haj}.} while they were working on the Kauffman bracket skein module of the
Whitehead type manifolds.

\begin{theorem}
\label{Theorem 3.1}In $RKBSM$ of $D^{2}\times I$ with $2\left( m+n\right) $
points we have%
\begin{equation*}
L\left( m,n\right) =\sum_{k=0}^{m}P_{k,m-k,h}T_{k,\,m-k,\text{ }h}+Remainder,
\end{equation*}%
where $P_{k,m-k,h}=A^{nm-2\left( k+h\right) k}{m\brack k}_{A^{-4}};$ $%
T_{k,m-k,h}$ is the Catalan state with $2k$ positive diagonal arcs, $2\left(
m-k\right) $ negative diagonal arcs and $h=n-m\geq 0$ vertical arcs $($see 
\textrm{Figure} $3.1);$ and the \emph{remainder} is a polynomial $($over $%
\mathbb{Z}\left[ A^{\pm 1}\right] $ $)$ in variables consisting of
realizable Catalan states that have at least one return.\footnote{%
It is worth to observe that if we decorate lattice crossing $L\left(
m,n\right) $ by Jones-Wenzl idempotents $f_{m}$ and $f_{n}$ (respectively)
then reminder is $0$ since $f_{k}e_{i}=0=e_{i}f_{k}$ for any $i$ and $k$.}
\end{theorem}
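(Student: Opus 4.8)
The plan is to compute $r\left( T_{k,m-k,h}\right) $ directly as the weighted sum $\sum_{s\in R\left( T_{k,m-k,h}\right) }A^{p(s)-n(s)}(-A^{2}-A^{-2})^{|s|}$ and to organize that sum by induction on $m$, using the vertical composition $\ast _{v}$ and the splitting results of Section~\ref{Sec_1}. I would begin with two reductions. First, since $T_{k,m-k,h}$ has no returns, no marker choice realizing it can close off a trivial loop, so $|s|=0$ for every $s\in R\left( T_{k,m-k,h}\right) $, the factor $(-A^{2}-A^{-2})^{|s|}$ disappears, and $r\left( T_{k,m-k,h}\right) =\sum_{s\in R}A^{p(s)-n(s)}$. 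Second, I would verify that the realizable \emph{no-return} states of $L\left( m,n\right) $ are exactly the $m+1$ states $T_{k,m-k,h}$, $k=0,\dots,m$: a crossingless matching of the $2\left( m+n\right) $ boundary points with no same-side arc cannot contain a left--right arc without forcing a top or bottom return (a short argument using the cyclic order of the boundary points), so each arc is either a top--bottom (vertical) arc or a side-to-top/bottom (diagonal) arc, and conditions $H\left( m,n\right) $, $V\left( m,n\right) $ fix the numbers of each.

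For the base case $m=0$ the only state is $T_{0,0,n}$ with coefficient $1=P_{0,0,n}$, and for $m=1$ the tangle $L\left( 1,n\right) $ is a single row of $n$ crossings; tracing the two ways a single strand can thread the row shows that only the all-positive and all-negative marker choices avoid returns, giving $r\left( T_{0,1,n-1}\right) =A^{n}$ and $r\left( T_{1,0,n-1}\right) =A^{-n}$, matching $P_{0,1,n-1}$ and $P_{1,0,n-1}$ since ${1\brack 0}_{A^{-4}}={1\brack 1}_{A^{-4}}=1$. For the inductive step I would peel the top horizontal strand, writing the diagram as $L\left( 1,n\right) \ast _{v}L\left( m-1,n\right) $ and applying the Split Lemma (Lemma~\ref{Lemma 2.4}) together with Lemma~\ref{Lemma 2.3}. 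In any $s\in R\left( T_{k,m-k,h}\right) $ the top strand, having no return available on the top row, must thread through as a single positive or a single negative diagonal; removing it leaves a realizing state of a no-return state of $L\left( m-1,n\right) $, which is $T_{k-1,m-k,h}$ in the first case and $T_{k,m-1-k,h}$ in the second.

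These two alternatives are precisely the two terms of the $q$-Pascal recurrence ${m\brack k}_{A^{-4}}={m-1\brack k-1}_{A^{-4}}+A^{-4k}{m-1\brack k}_{A^{-4}}$, and the proof amounts to showing that the local $A$-weight accumulated while carrying the new strand past the $m-1$ diagonals already present converts the coefficients of $T_{k-1,m-k,h}$ and $T_{k,m-1-k,h}$ for $L\left( m-1,n\right) $ into the two summands of $P_{k,m-k,h}$ for $L\left( m,n\right) $. The monomial prefactor $A^{nm-2\left( k+h\right) k}$ is pinned down at the two extreme states $T_{0,m,h}$ (all-positive markers, weight $A^{nm}$) and $T_{m,0,h}$ (all-negative markers, weight $A^{-nm}$), while the bracket ${m\brack k}_{A^{-4}}$ records the mixtures; the $h=n-m$ vertical strands pass straight through and contribute uniformly, which is why the bracket is independent of $n$.

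The step I expect to be the main obstacle is exactly this $A$-exponent bookkeeping: establishing that the set of marker choices realizing a fixed $T_{k,m-k,h}$ is in \emph{weight-preserving} bijection with the lattice paths (partitions in a $k\times \left( m-k\right) $ box) enumerated by ${m\brack k}_{A^{-4}}$, and that the prefactor offset $nm-2\left( k+h\right) k$ comes out correctly. I would control this by introducing the statistic ``number of crossings at which the two threaded diagonals meeting there receive opposite markers,'' showing that it equals the area of the associated partition and that increasing it by one multiplies $A^{p(s)-n(s)}$ by $A^{-4}$; verifying that these are the \emph{only} no-loop realizations (so that $|s|=0$ is maintained and no extra states intrude) is the delicate part. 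A useful consistency check throughout is the Jones--Wenzl remark in the footnote: decorating $L\left( m,n\right) $ by $f_{m}$ and $f_{n}$ annihilates the remainder, so the computed $P_{k,m-k,h}$ must assemble into the idempotent-decorated evaluation.
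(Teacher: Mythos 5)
Your proposal breaks at its very first reduction: it is \emph{not} true that every Kauffman state realizing a no-return Catalan state is loop-free. Already for $L(2,2)$ and the state $T_{1,1,0}$ (the four corner arcs), the choice of markers that rounds off the four quadrants facing the central region of the $2\times 2$ grid produces exactly that boundary connection together with one closed loop encircling the center; this state lies in $R(T_{1,1,0})$, has $|s|=1$ and $p(s)-n(s)=0$, and contributes $-A^{2}-A^{-2}$. Direct enumeration shows $R(T_{1,1,0})$ contains (at least) five states, with contributions $A^{2}+A^{2}+A^{-2}+A^{-2}+(-A^{2}-A^{-2})=A^{2}+A^{-2}=P_{1,1,0}$, so the Gaussian binomial emerges only after a cancellation in which the loop term is essential. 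Consequently your proposed weight-preserving bijection between $R(T_{k,m-k,h})$ and partitions in a $k\times(m-k)$ box cannot exist even at the level of cardinalities ($|R(T_{1,1,0})|>2={2\brack 1}_{1}$), and the step you flag as ``the delicate part'' rests on a false premise rather than on delicate bookkeeping. A second, related gap: in the inductive step you assert that in any realizing state the strand producing a diagonal arc threads through the top row as a single diagonal, so that deleting the top row leaves a realizing state of $L(m-1,n)$. This also fails --- in the same $L(2,2)$ example one realizing state sends the arc from the second top point down into the second row and back up --- and closed loops can straddle the separating line, so the vertical splitting does not induce a clean correspondence with states of $L(m-1,n)$.

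The paper avoids all state counting. It resolves the single top-left crossing of $L(m,n)$ by the skein relation and reads off from the two resulting diagrams the recursion $P_{k,m-k,h}=A^{(m+n)-1}P_{k,m-k-1,h}+A^{1-(m+n)}P_{k-1,m-k,h}$, where the right-hand coefficients are those of the no-return states of a lattice crossing with parameters $(m-1,n-1)$ and the same $h$; together with the initial conditions $P_{m,0,h}=A^{-mn}$ and $P_{0,m,h}=A^{mn}$ (your two ``extreme'' states), the closed form $A^{mn-2(k+h)k}{m\brack k}_{A^{-4}}$ is then verified against this recursion using the $q$-Pascal identity. Your identification of the realizable no-return states as exactly the $T_{k,m-k,h}$, $k=0,\dots,m$, is correct and consistent with the paper, but the core of your argument would have to be rebuilt either around such a skein-relation recursion or around a grouping of Kauffman states under which the loop contributions visibly cancel; as written, the direct loop-free state sum does not compute $r(T_{k,m-k,h})$.
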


\smallskip

\begin{figure}[h]
\centering
\includegraphics[scale=0.75]{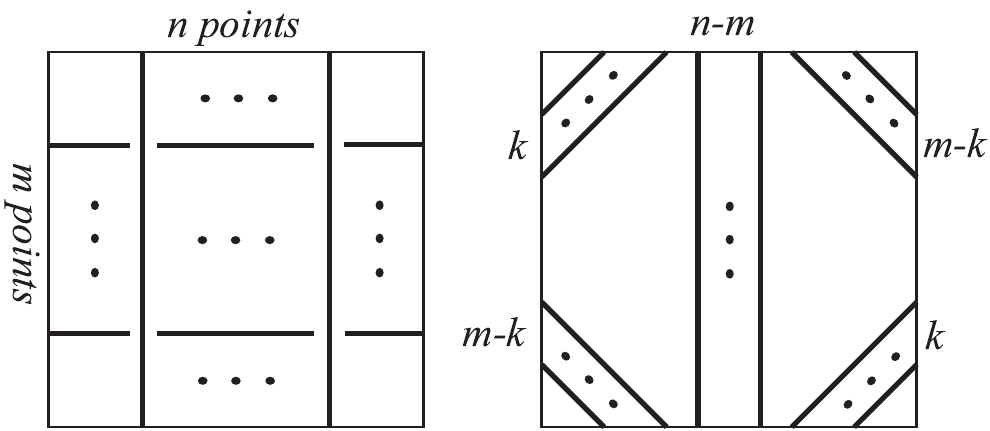}  
\caption{Figure 3.1}
\label{Figure 3.1}
\end{figure}

\noindent Before we give a proof of Theorem \ref{Theorem 3.1}, we need to
recall the notation $q$-analogue of an integer $m$ and $q$-analogue of
binomial coefficient$.$ A $q$-analogue of an integer $m$ is defined as
follows $[m]_{q}=1+q+q^{2}+...+q^{m-1},$ and if $[m]_{q}!=\left[ 1\right]
_{q}\cdot \left[ 2\right] _{q}\cdot ...\cdot \left[ m\right] _{q}$ then $q$%
-analogue of binomial coefficient is defined by%
\begin{equation*}
{m\brack k}_{q}=\frac{[m]_{q}!}{[k]_{q}!\cdot \lbrack m-k]_{q}}
\end{equation*}%
One shows using the following identity for $q$-analogue of binomial
coefficient%
\begin{equation*}
{m\brack k}_{q}={m-1\brack k}_{q}+q^{m-k}{m-1\brack k-1}_{q}
\end{equation*}%
and induction that ${m\brack k}_{q}\in \mathbb{Z}\left[ q\right] $.

\begin{proof}
Applying Kauffman bracket skein relation (see \textrm{Figure} $1.3$) for the
crossing in top left corner of $L\left( m,n\right) $ (see \textrm{Figure} $%
3.2$) we obtain the following recursive formula $P_{k,m-k,h}=A^{\left(
m+n\right) -1}P_{k,m-k-1,h}+A^{1-\left( m+n\right) }P_{k-1,m-k,h}:$

\begin{figure}[h]
\centering
\includegraphics[scale=0.65]{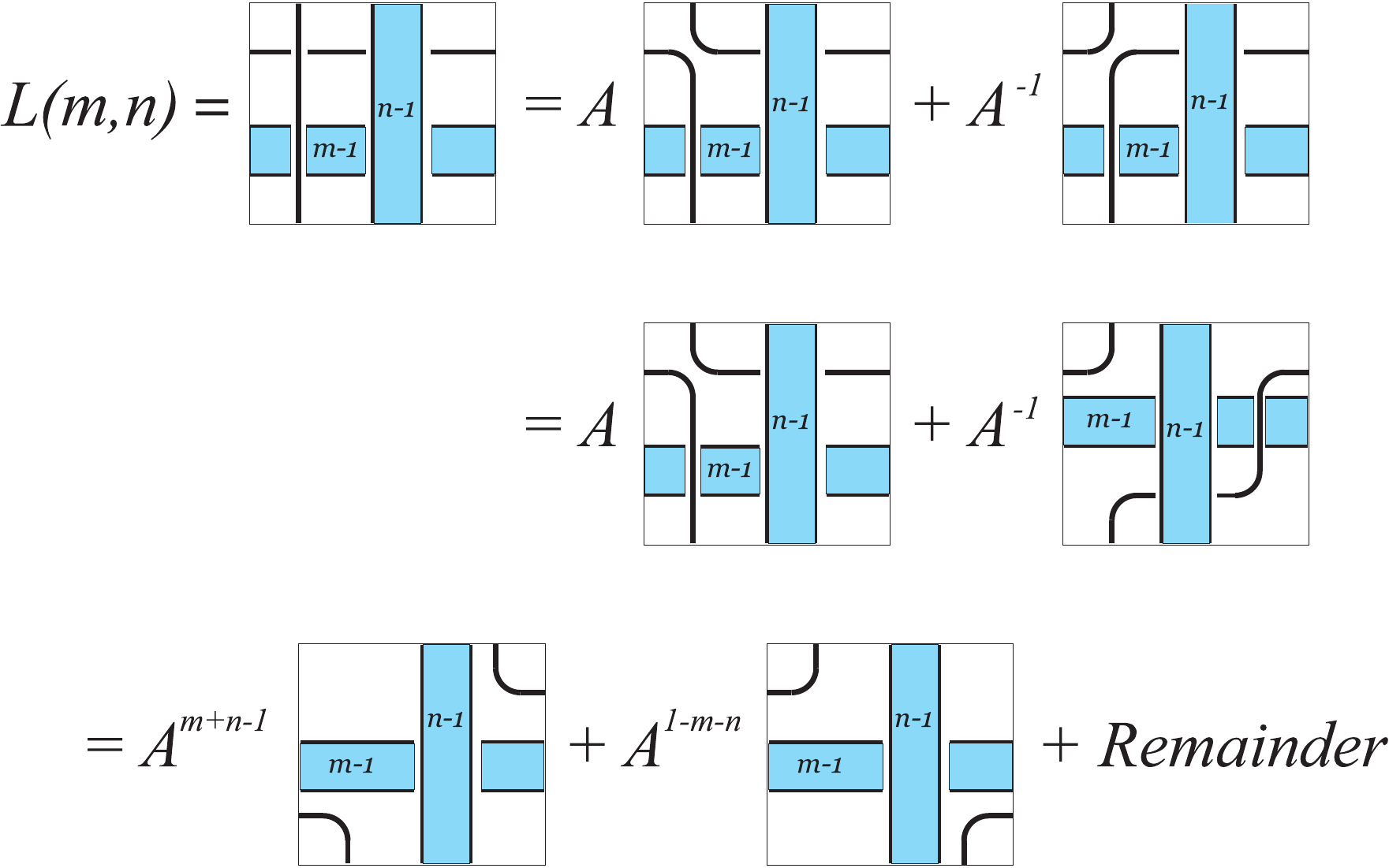}  
\caption{Figure 3.2}
\label{Figure 3.2}
\end{figure}

Now, we let $P_{m,0,h}=A^{-mn}$ and $P_{0,m,h}=A^{mn}.$ One verifies that $%
P_{k,m-k,h}=A^{mn-2\left( k+h\right) k} {m \brack k}_{A^{-4}}$ satisfies the
above recurrence\footnote{%
For the initial conditions we check:
\par
$P_{m,0,h}=A^{mn-2\left( m+h\right) m}=A^{mn-2\left( m+n-m\right)
m}=A^{-mn}, $ and $P_{0,m,h}=A^{mn-0}=A^{mn}$.}. Namely, we have by the
property of $q$-analogue of binomial coefficient with $q=A^{-4}:$%
\begin{eqnarray*}
P_{k,m-k,h} &=&A^{mn-2\left( k+h\right) k} {m \brack k}_{A^{-4}}=A^{mn-2%
\left( k+h\right) k} {m-1 \brack k}_{A^{-4}}+A^{mn-2\left( k+h\right)
k}A^{-4\left( m-k\right) } {m-1 \brack k-1}_{A^{-4}} \\
&=&A^{\left( m+n\right) -1}A^{1-m-n+mn-2(k+h)k} {m-1 \brack k}%
_{A^{-4}}+A^{1-\left( m+n\right) }A^{m+n-1+mn-2(k+h)k-4(m-k)} {m-1 \brack k-1%
}_{A^{-4}} \\
&=&A^{\left( m+n\right) -1}A^{\left( m-1\right) \left( n-1\right) -2(k+h)k}{%
m-1 \brack k}_{A^{-4}}+A^{1-\left( m+n\right)
}A^{(m-1)(n-1)+2((k-1)+h)(k-1)} {m-1 \brack k-1}_{A^{-4}} \\
&=&A^{\left( m+n\right) -1}P_{k,m-k-1,h}+A^{1-\left( m+n\right)
}P_{k-1,m-k,h}
\end{eqnarray*}%
as needed.
\end{proof}

\smallskip

\begin{remark}
One of the goals for future research is to find a "\emph{relatively good}"
formula for coefficients of all realizable Catalan states of $L\left(
m,n\right) $ in $RKBSM$ of $D^{2}\times I$ with $2\left( m+n\right) $
points. In particular, Theorem \ref{Theorem 3.1} gives an explicit formula
for all coefficients of Catalan states with no returns. A natural
generalization of this family Catalan states would be families of Catalan
states with three, two, or just one side that has no return. As we proved in
Lemma \ref{Lemma 2.2} even if only one side $($left or right$)$ of the
Catalan state has no return then any horizontal separating line can cut such
a Catalan state in at most $n$ points.
\end{remark}

\section{Number of Realizable Catalan States by $L\left( m,n\right) $\label%
{Sec_4}}

In this section, we find a closed formula for $\left\vert \mathfrak{Cat}%
_{m,n}^{R}\right\vert $ -- the number of Catalan states that could be
obtained as Kauffman states of $L\left( m,n\right) ,$ or alternatively, we
find number of elements of the set $\mathfrak{F}_{m,n}=\mathfrak{Cat}%
_{m,n}\backslash \mathfrak{Cat}_{m,n}^{R}$ of all \emph{Forbidden Catalan
states.} As we showed in Theorem \ref{Theorem 2.5}, a Catalan state $C\in 
\mathfrak{Cat}_{m,n}$ is realizable if and only if $C$ satisfies conditions $%
H\left( m,n\right) $ and $V\left( m,n\right) .$ Therefore, every forbidden
state $C$ must intersects at least one of the horizontal or vertical lines
more than $n$ or $m$ times respectively. Let $\mathbf{T}^{h}\left(
m,n\right) $ and $\mathbf{T}^{v}\left( m,n\right) $ denote sets of Catalan
states that are excluded by horizontal and vertical condition respectively.
More precisely, we have for the lines in \textrm{Figure} $1.2:$ 
\begin{equation*}
\mathbf{T}^{h}\left( m,n\right) =\left\{ C\in \mathfrak{Cat}_{m,n}\text{ }|%
\text{ }d^{h}\left( C\right) >n\right\} \text{ and }\mathbf{T}^{v}\left(
m,n\right) =\left\{ C\in \mathfrak{Cat}_{m,n}\text{ }|\text{ }d^{v}\left(
C\right) >m\right\} 
\end{equation*}%
We call them respectively \emph{horizontally excluded} and \emph{vertically
excluded Catalan states}. Clearly, we see that%
\begin{equation*}
\mathbf{T}^{h}\left( m,n\right) \cup \mathbf{T}^{v}\left( m,n\right) =%
\mathfrak{F}_{m,n}.
\end{equation*}%
However, more is true, i.e. $\mathbf{T}^{h}\left( m,n\right) \cap \mathbf{T}%
^{v}\left( m,n\right) =\emptyset $, which is a consequence of the following
proposition.

\begin{proposition}
\label{Proposition 4.1}Let $S$ be a circle with an even number of points $%
a_{1}+a_{2}+a_{3}+a_{4}$ and consider a pair of lines intersecting at one
point and splitting $S$ into sectors $A_{i}$ with $a_{i}$ points each
sectors, $i=1,$ $2,$ $3,$ $4$ $($see \textrm{Figure} $4.1)$. Let $C$ be a
Catalan state and $m\left( C\right) $ be the number of intersections of $C$
with the pair of lines.

\begin{description}
\item[(i)] If points from sectors $A_{2}$ and $A_{4}$ are not connected in $%
C,$ then $m\left( C\right) \leq a_{1}+a_{3}.$

\item[(ii)] If points from sectors $A_{1}$ and $A_{3}$ are not connected in $%
C,$ then $m\left( C\right) \leq a_{2}+a_{4}.$

\item[(iii)] For any $C,$ $m\left( C\right) \leq \min \left\{
a_{1}+a_{3},\,a_{2}+a_{4}\right\} $.
\end{description}
\end{proposition}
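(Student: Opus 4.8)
The plan is to express $m(C)$ as a sum of local contributions, one for each arc of $C$, and then to bound these contributions sector by sector. First I would record the standard fact that for a crossingless matching and a single chord the minimal number of intersections equals the number of arcs whose two endpoints lie on opposite sides of that chord: a straddling arc must cross by the Jordan curve theorem, and since the arcs are pairwise disjoint all straddling arcs can be made to meet the chord exactly once at the same time. Applying this to each of the two lines and adding, $m(C)$ becomes a sum over the arcs of $C$ of a local crossing number equal to $0$ when both endpoints lie in a common open sector, $1$ when they lie in adjacent sectors, and $2$ when they join the opposite sectors $A_1,A_3$ or the opposite sectors $A_2,A_4$, these being the only arcs separated by both lines.

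For (i) I would argue one arc at a time. Assuming that no arc joins $A_2$ to $A_4$, I claim that for every arc the number of its intersections with the pair of lines is at most the number of its endpoints lying in $A_1\cup A_3$. Indeed, an arc inside $A_1$ or $A_3$ gives $0$ crossings and $2$ such endpoints; an $A_1$--$A_3$ arc gives $2$ crossings and exactly $2$ such endpoints; an arc from $A_1$ or $A_3$ to an adjacent sector gives $1$ crossing and $1$ such endpoint; and an arc inside $A_2$ or $A_4$ gives $0$ crossings and $0$ such endpoints. The excluded $A_2$--$A_4$ arcs are precisely the ones that would contribute crossings without any endpoint in $A_1\cup A_3$, so summing the inequality over all arcs gives $m(C)\le a_1+a_3$. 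Part (ii) is the same argument with the roles of $\{A_1,A_3\}$ and $\{A_2,A_4\}$ interchanged; beyond the straddling-arc count neither bound uses planarity of $C$.

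For (iii) the crossingless hypothesis is what does the work. In the cyclic order $A_1,A_2,A_3,A_4$ around $S$ the endpoints of any $A_1$--$A_3$ arc separate the endpoints of any $A_2$--$A_4$ arc, so two such arcs would be forced to meet; hence $C$ cannot contain both an $A_1$--$A_3$ arc and an $A_2$--$A_4$ arc. Therefore at least one of the hypotheses of (i) and (ii) is satisfied, and at least one of $m(C)\le a_1+a_3$ and $m(C)\le a_2+a_4$ holds.

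The step I expect to be the main obstacle is upgrading this to the full minimum, together with the crossing-incompatibility just used. To close the gap I would bring in the exact sector counts: writing $s_i$ for the number of arcs with both endpoints in $A_i$ and $d_{13}$ for the number of $A_1$--$A_3$ arcs, a direct count gives, when no $A_2$--$A_4$ arc is present, the identity $a_2+a_4-m(C)=2\left(s_2+s_4-d_{13}\right)$. Thus the second bound $m(C)\le a_2+a_4$ holds exactly when $s_2+s_4\ge d_{13}$, and this is guaranteed once $a_1+a_3=a_2+a_4$, since balancing the two sector-sums forces $s_2+s_4=s_1+s_3+d_{13}$. This is precisely the configuration arising in the application to $L(m,n)$, where a horizontal and a vertical separating line cut $P$ into four rectangles whose two opposite pairs each carry $m+n$ boundary points, so the two one-sided bounds coincide and collapse to their minimum. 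Hence the delicate points are the cyclic-interleaving argument ruling out simultaneous diagonal arcs and the balance of the opposite sector-sums; the counting itself is routine.
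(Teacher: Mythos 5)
Your proof of (i) and (ii) is correct and is essentially the paper's own argument in different bookkeeping: the paper counts $2u$ intersections coming from the $u$ diagonal $A_1$--$A_3$ arcs and at most one intersection for each of the remaining $a_1+a_3-2u$ points of $A_1\cup A_3$, which is exactly your per-arc inequality summed over all arcs. Where you genuinely diverge is part (iii). The paper dismisses it as a direct consequence of (i) and (ii), but the interleaving observation only guarantees that \emph{one} of the two hypotheses holds, hence only $m(C)\le\max\{a_1+a_3,\,a_2+a_4\}$; you are right to flag the upgrade to the minimum as the real obstacle. In fact, as literally stated (iii) fails for unbalanced sectors: take $a_1=a_3=2$, $a_2=a_4=0$ and join each $A_1$ point to an $A_3$ point by a diagonal arc, so that $m(C)=4>0=\min\{a_1+a_3,\,a_2+a_4\}$. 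Your identities $a_2+a_4-m(C)=2(s_2+s_4-d_{13})$ and $a_1+a_3-m(C)=2(s_1+s_3)$ (valid when no $A_2$--$A_4$ arc is present) check out, and they show that the minimum bound does hold whenever $a_1+a_3=a_2+a_4$, which is precisely the balanced situation the paper actually uses in Corollary 4.2, where $a_1+a_3=a_2+a_4=m+n$. So your proposal recovers everything the application needs and repairs a step the paper glosses over; the only amendment I would ask for is to state explicitly that (iii) requires the hypothesis $a_1+a_3=a_2+a_4$ (or must be weakened to the assertion that at least one of the two bounds holds), since the unrestricted claim is false.
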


\begin{proof}
We observe that in a Catalan state $C$ we cannot have points from $A_{1}$
connected with points from $A_{3},$ and points from $A_{2}$ connected with
points from $A_{4}$ at the same time. For $\left( \mathbf{i}\right) ,$ we
notice that since points from sectors $A_{2}$ and $A_{4}$ are not connected
in $C,$ if $u$ is the number of connections in $C$ between points from
sectors $A_{1}$ and $A_{3}$ then these connections intersect pair of lines
at $2u$ points and involve $2u$ points from sectors $A_{1}$ and $A_{3}$ in
total. Consequently, there is a total of $a_{1}+a_{3}-2u$ points left in
sectors $A_{1}$ and $A_{3}$ that may form connections intersecting the pair
of lines at most once each. It follows that total number of intersections of
Catalan state $C$ with the pair of lines is bounded above by $a_{1}+a_{3}$.

Proof of $\left( \mathbf{ii}\right) $ follows by changing the roles of $%
A_{1},$ $A_{3}$ and $A_{2},$ $A_{4},$ and statement $\left( \mathbf{iii}%
\right) $ is a direct consequence of $\left( \mathbf{i}\right) $ and $\left( 
\mathbf{ii}\right) $.
\end{proof}

\begin{figure}[h]
\centering
\includegraphics[scale=0.22]{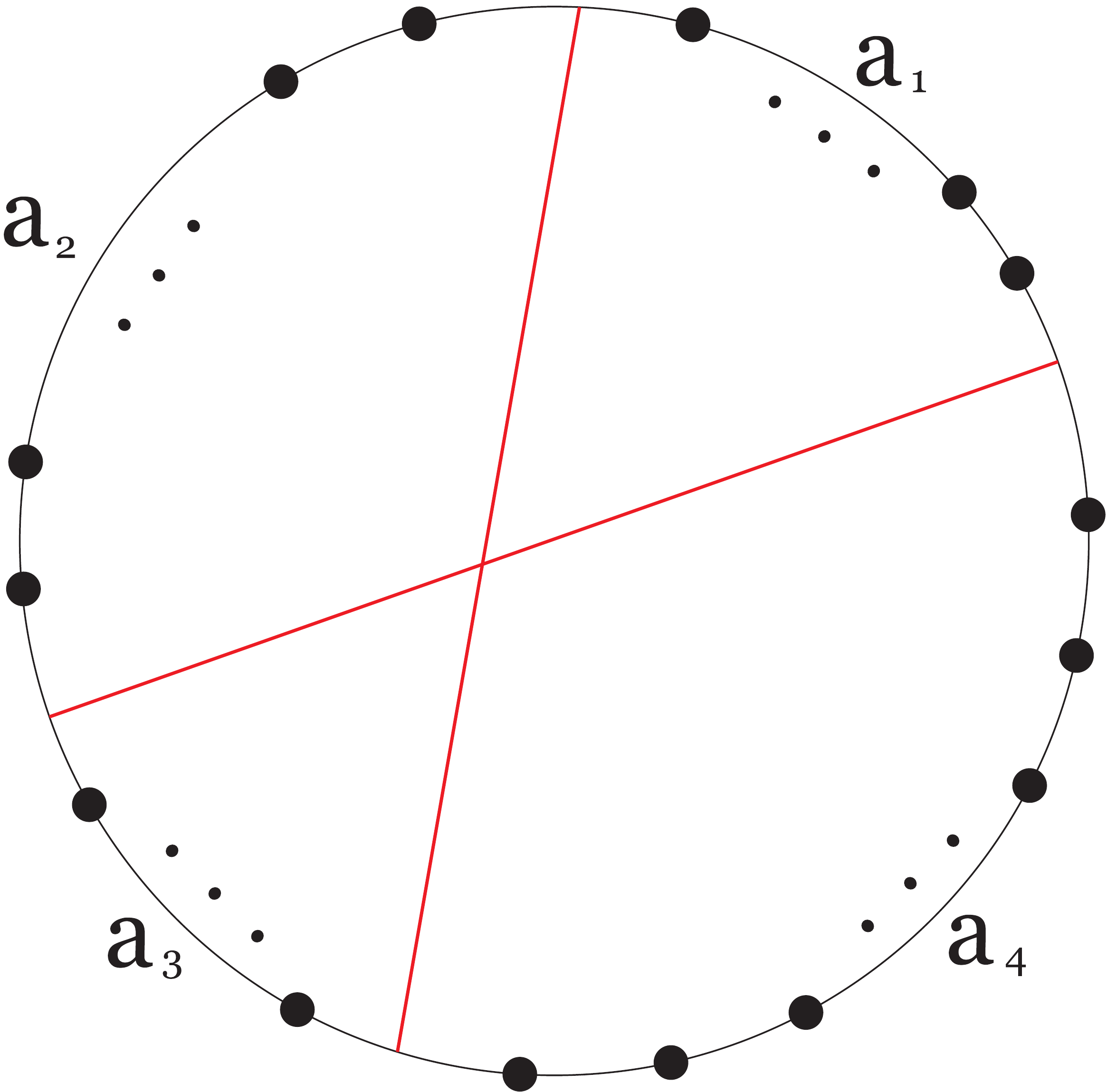}  
\caption{Figure 4.1}
\label{Figure 4.1}
\end{figure}

Proposition \ref{Proposition 4.1} has the following important consequence.

\begin{corollary}
\label{Corollary 4.2}Horizontally and vertically excluded Catalan states are
disjoint.
\end{corollary}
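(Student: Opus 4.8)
The plan is to derive the disjointness directly from Proposition \ref{Proposition 4.1}$\left( \mathbf{iii}\right) $, applied to a well-chosen pair consisting of one horizontal and one vertical separating line. I would argue by contradiction: suppose some $C\in \mathbf{T}^{h}\left( m,n\right) \cap \mathbf{T}^{v}\left( m,n\right) $. By definition of $\mathbf{T}^{h}$ there is a horizontal line $\ell ^{h}=l_{a}^{h}$ with $\#\left( \ell ^{h}\cap C\right) =d^{h}\left( C\right) >n$, and by definition of $\mathbf{T}^{v}$ there is a vertical line $\ell ^{v}=l_{b}^{v}$ with $\#\left( \ell ^{v}\cap C\right) =d^{v}\left( C\right) >m$. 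These two separating lines meet at a single interior point of $P$, so together they form exactly the configuration to which Proposition \ref{Proposition 4.1} applies.

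Next I would identify the four sectors $A_{1},A_{2},A_{3},A_{4}$ cut out by $\ell ^{h}$ and $\ell ^{v}$ on the boundary of $P$ and count the marked points in each. If $\ell ^{h}$ sits at height $a$ (separating the top $a$ rows from the bottom $m-a$) and $\ell ^{v}$ at width $b$, their intersection point splits $P$ into four quadrants whose boundary arcs carry, respectively, $a+b$, $a+\left( n-b\right) $, $\left( m-a\right) +\left( n-b\right) $ and $\left( m-a\right) +b$ of the $2\left( m+n\right) $ boundary points, each arc collecting the top/bottom points on its side of $\ell ^{v}$ together with the left/right points on its side of $\ell ^{h}$. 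The two pairs of opposite sectors therefore satisfy
\[
a_{1}+a_{3}=\left( a+b\right) +\left( m-a+n-b\right) =m+n,\qquad a_{2}+a_{4}=\left( a+n-b\right) +\left( m-a+b\right) =m+n.
\]

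I would then combine the two intersection counts. Since $\ell ^{h}$ and $\ell ^{v}$ meet only at one point (which $C$ may be taken to avoid), in any isotopy representative the intersections of $C$ with the pair of lines are the disjoint union of its intersections with each line; as each term is bounded below by its own minimal intersection number, minimizing over isotopy gives $m\left( C\right) \geq \#\left( \ell ^{h}\cap C\right) +\#\left( \ell ^{v}\cap C\right) >n+m$. On the other hand, Proposition \ref{Proposition 4.1}$\left( \mathbf{iii}\right) $ yields $m\left( C\right) \leq \min \left\{ a_{1}+a_{3},\,a_{2}+a_{4}\right\} =m+n$. These two inequalities are incompatible, so no such $C$ exists, proving $\mathbf{T}^{h}\left( m,n\right) \cap \mathbf{T}^{v}\left( m,n\right) =\emptyset $.

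The deduction is short once the setup is correct; the step needing the most care is the bookkeeping in the middle paragraph, namely verifying that \emph{regardless} of where the maximizing lines sit the two opposite-sector totals both collapse to $m+n$, since it is precisely this cancellation (the $\pm a$ and $\pm b$ dropping out) that pits the excesses ``$>n$'' and ``$>m$'' against a common bound. A secondary point worth stating explicitly is the additivity of the intersection count over the pair of lines, i.e.\ that the minimal number of intersections with the union is bounded below by the sum of the separate minimal intersection numbers; this is what licenses feeding $d^{h}\left( C\right) +d^{v}\left( C\right) $ into Proposition \ref{Proposition 4.1}.
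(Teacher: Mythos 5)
Your proof is correct and follows essentially the same route as the paper's: both apply Proposition \ref{Proposition 4.1}$\left( \mathbf{iii}\right)$ to a pair consisting of one horizontal and one vertical separating line, observe that $a_{1}+a_{3}=a_{2}+a_{4}=m+n$, and conclude that the two lines cannot simultaneously exceed their respective bounds $n$ and $m$. Your version merely spells out two points the paper leaves implicit, namely the sector-by-sector count and the additivity of the minimal intersection number over the two lines.
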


\begin{proof}
For a pair of lines consisting of a vertical and a horizontal line as in 
\textrm{Figure} $1.2,$ we see that $a_{1}+a_{3}=a_{2}+a_{4}=m+n.$ Applying
result of Proposition \ref{Proposition 4.1} for such a pair of lines, for
any Catalan state $C,$ we have $m\left( C\right) \leq m+n.$ Therefore, if
the horizontal line intersects $C$ more then $n$ times then the vertical
line must intersect $C$ less than $m$ times. Analogously, if the vertical
line intersects $C$ more then $m$ times then the horizontal line must
intersect $C$ less than $n$ times.
\end{proof}

\medskip

As a consequence of Corollary \ref{Corollary 4.2} and Theorem \ref{Theorem
2.5}, we obtain the following result.

\begin{corollary}
\label{Corollary 4.3}Let $T^{h}\left( m,n\right) $ and $T^{v}\left(
m,n\right) $ be the number of horizontally and vertically excluded Catalan
states. The number of Forbidden Catalan states equals $T^{h}\left(
m,n\right) +T^{v}\left( m,n\right).$
\end{corollary}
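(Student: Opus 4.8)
The plan is to obtain the corollary as an immediate counting consequence of the two structural results already established, so the work is almost entirely bookkeeping. First I would recall from Theorem \ref{Theorem 2.5} that a Catalan state $C\in\mathfrak{Cat}_{m,n}$ is realizable precisely when it satisfies both conditions $H\left(m,n\right)$ and $V\left(m,n\right)$. Passing to complements, the forbidden states are exactly those failing at least one of the two conditions, which by the very definitions of $\mathbf{T}^{h}\left(m,n\right)$ and $\mathbf{T}^{v}\left(m,n\right)$ amounts to the equality
\[
\mathfrak{F}_{m,n}=\mathbf{T}^{h}\left(m,n\right)\cup\mathbf{T}^{v}\left(m,n\right),
\]
already noted in the text preceding Proposition \ref{Proposition 4.1}.

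Next I would invoke Corollary \ref{Corollary 4.2}, which asserts that horizontally and vertically excluded states are disjoint, so $\mathbf{T}^{h}\left(m,n\right)\cap\mathbf{T}^{v}\left(m,n\right)=\emptyset$. Since the cardinality of a union of two disjoint finite sets is additive, we conclude
\[
\left\vert\mathfrak{F}_{m,n}\right\vert=\left\vert\mathbf{T}^{h}\left(m,n\right)\right\vert+\left\vert\mathbf{T}^{v}\left(m,n\right)\right\vert=T^{h}\left(m,n\right)+T^{v}\left(m,n\right),
\]
which is the claimed formula. This is exactly the assertion, so no further manipulation is needed.

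It is worth being explicit that the entire substance of the argument resides in the two inputs rather than in the corollary itself. The only genuine obstacle is ruling out any overlap between the two families, and that was already dispatched in Corollary \ref{Corollary 4.2} by means of the geometric bound $m\left(C\right)\leq m+n$ of Proposition \ref{Proposition 4.1}: for the pair consisting of one horizontal and one vertical separating line one has $a_{1}+a_{3}=a_{2}+a_{4}=m+n$, so a single Catalan state cannot simultaneously cut a horizontal line more than $n$ times and a vertical line more than $m$ times, as that would force strictly more than $m+n$ total intersections. Once disjointness is secured, the corollary reduces to additivity of cardinality over a disjoint union, and there is nothing left to prove.
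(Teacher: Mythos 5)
Your proposal is correct and matches the paper's own (implicit) argument: the paper derives this corollary exactly as you do, combining Theorem \ref{Theorem 2.5} (which gives $\mathfrak{F}_{m,n}=\mathbf{T}^{h}\left(m,n\right)\cup\mathbf{T}^{v}\left(m,n\right)$) with the disjointness from Corollary \ref{Corollary 4.2} and additivity of cardinality over a disjoint union. No gaps.
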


It is worth to note that following relation between numbers $T^{h}\left(
m,n\right) $ and $T^{v}\left( m,n\right) $ holds%
\begin{equation*}
T^{h}\left( m,n\right) =T^{v}\left( n,m\right) .
\end{equation*}%
Consequently, from now on, we will consider only $T^{h}\left( m,n\right) $.

In order to find $T^{h}\left( m,n\right) ,$ we start by constructing
bijection between the set $\mathfrak{Cat}_{m,n}$ and the set of Dyck paths $%
\mathfrak{D}_{m,n}$ of semilength $m+n$ from $\left( 0,0\right) $ to $\left(
m+n,\text{ }m+n\right) $ that never pass below the line $y=x$ and consists
only the vertical steps $V=\left( 0,1\right) $ (\emph{ascents}) and
horizontal steps $H=\left( 1,0\right) $ (\emph{descents}) as it is shown in 
\textrm{Figure} $4.2$.

Consider $2\left( m+n\right) +1$ lines $l_{0},$ $l_{1},$ $...,$ $l_{2\left(
m+n\right) }$ as in \textrm{Figure} $4.3$. Any Catalan state $C\in \mathfrak{%
Cat}_{m,n}$ uniquely determines the minimal number of intersections $%
n_{j}\left( C\right) =\#l_{j}\cap C$ of $C$ with each line $l_{j},$ $0\leq
j\leq 2\left( m+n\right) $ and let 
\begin{equation*}
r_{j}\left( C\right) =n_{j}\left( C\right) -n_{j-1}\left( C\right) ,\text{ }%
j=1,2,...,2\left( m+n\right) .
\end{equation*}%
\begin{figure}[h]
\centering
\begin{minipage}{.45\textwidth}
\centering
\includegraphics[width=\linewidth]{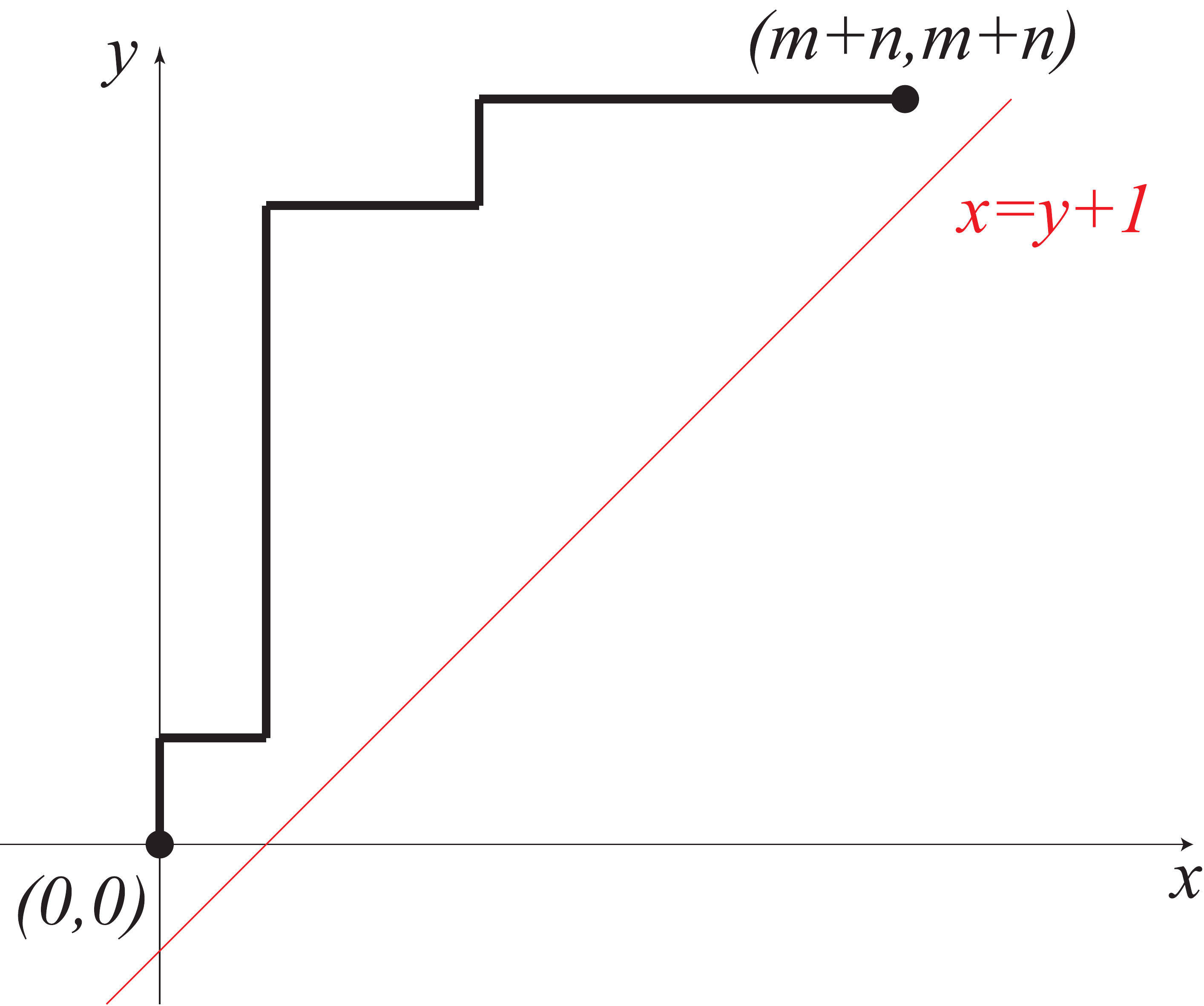}
\caption{Figure 4.2}
\label{fig 4.2}
\end{minipage}\qquad 
\begin{minipage}{.3\textwidth}
\centering
\includegraphics[width=\linewidth]{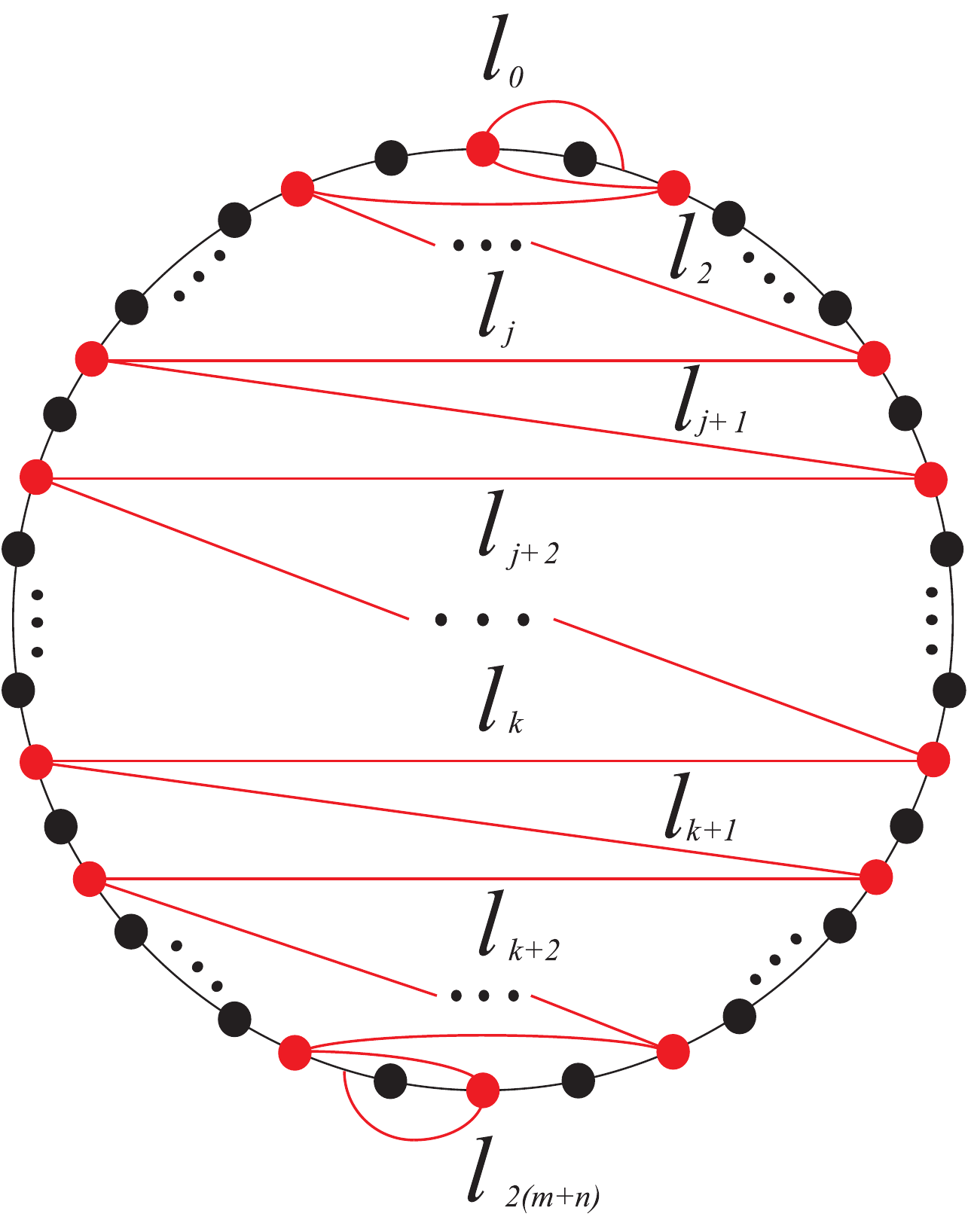}
\caption{Figure 4.3}
\label{fig 4.3}
\end{minipage}
\end{figure}

\begin{lemma}
\label{Lemma 4.4}For all $j=1,2,...,2\left( m+n\right) ,$ we have $%
r_{j}\left( C\right) =\pm 1.$
\end{lemma}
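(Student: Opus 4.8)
The plan is to understand what the quantities $n_j(C)$ measure geometrically and then track how they can change as we pass from one line $l_j$ to the next. Since $l_{j-1}$ and $l_j$ are consecutive lines in the family of $2(m+n)+1$ lines shown in Figure $4.3$, they are separated by exactly one boundary point of $P$; the line $l_0$ sits before all $2(m+n)$ boundary points and $l_{2(m+n)}$ sits after all of them, so in particular $n_0(C)=0$ and $n_{2(m+n)}(C)=0$. The key observation I would establish first is that $n_j(C)$ is the minimal number of strands of $C$ crossing $l_j$, equivalently (by the standard interpretation of crossingless matchings) the number of arcs of $C$ that ``separate'' the set of boundary points lying on one side of $l_j$ in a nontrivial way; because $C$ is a Catalan (crossingless) state, this minimal intersection number is well defined and equals the number of arcs with exactly one endpoint on each side of $l_j$.

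First I would set up the local picture at the single boundary point $p$ that lies between $l_{j-1}$ and $l_j$. Passing the line from position $j-1$ to position $j$ moves $p$ from one side to the other, so $n_j(C)$ and $n_{j-1}(C)$ count strands crossing two lines that differ only in which side contains $p$. The arc of $C$ incident to $p$ is the only arc whose ``crossing status'' can change: every other arc has both of its endpoints on the same side of both $l_{j-1}$ and $l_j$, or one endpoint on each side for both lines, and hence contributes equally to $n_{j-1}(C)$ and $n_j(C)$. Thus $r_j(C)=n_j(C)-n_{j-1}(C)$ is entirely determined by the behaviour of the single arc ending at $p$.

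Next I would analyze that single arc. Its other endpoint $q$ lies either on the same side of the two lines as $p$ ends up, or on the opposite side. If $q$ lies on the far side (the arc ``goes across'' the newly-passed point), then moving $p$ across the line adds one new crossing, giving $r_j(C)=+1$; if $q$ lies on the near side so that $p$'s arc now stays entirely on one side, then one crossing is removed and $r_j(C)=-1$. The crucial point, which uses that $C$ is a crossingless matching on a disk, is that exactly one of these two cases occurs and the contribution is exactly $\pm1$ rather than something larger: each boundary point is an endpoint of exactly one arc, so only one strand is affected, and a single strand changes the intersection count by precisely one. I would make this rigorous by appealing to the realization of $n_j(C)$ as the minimal geometric intersection number, which for a crossingless matching is computed by counting arcs that genuinely separate the two sides; moving one point changes the separating/non-separating status of exactly one arc.

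The main obstacle I expect is justifying the ``minimality'' claim carefully: one must argue that the minimal intersection number $n_j(C)$ really does increase or decrease by exactly one, and never by zero, when a single endpoint crosses $l_j$. This requires ruling out the possibility that isotoping $C$ could hide the change, i.e.\ that the arc at $p$ could be pushed to cross the line the same number of times on both sides. Here I would invoke the parity and nesting structure of Catalan states: since $n_j(C)$ has a fixed parity pattern (it counts arcs separating an odd-versus-even split of the boundary points), consecutive values $n_{j-1}(C)$ and $n_j(C)$ have opposite parity, so their difference is odd, and combined with the bound $|r_j(C)|\le 1$ coming from the single-strand analysis above, this forces $r_j(C)=\pm1$ exactly. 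The parity step is what makes the argument airtight and is the piece I would write out most carefully.
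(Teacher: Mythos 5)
Your argument is correct and follows essentially the same route as the paper: both proofs attribute the change $n_j(C)-n_{j-1}(C)$ entirely to the single arc incident to the one boundary point lying between $l_{j-1}$ and $l_j$, the paper organizing this as a four-case picture analysis over three successive lines while you treat all $j$ uniformly. Your parity observation (that $n_j(C)\equiv j \pmod 2$, so $r_j(C)$ is odd) is a clean way to rule out $r_j(C)=0$ and nicely supplements the bound $|r_j(C)|\le 1$ from the single-arc analysis.
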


\begin{proof}
We observe that for any three successive lines $l_{2j},$ $l_{2j+1},$ and $%
l_{2j+2},$ where $j=1,$ $2,$ $...,$ $\left( m+n\right) -2$, we have $4$
possibilities (see \textrm{Figure} $4.4a)-d)$) for the intersections of $C$
with lines $l_{2j},$ $l_{2j+1},$ and $l_{2j+2}$ $:$

\begin{figure}[h]
\centering
\begin{minipage}{.22\textwidth}
\centering
\includegraphics[width=\linewidth]{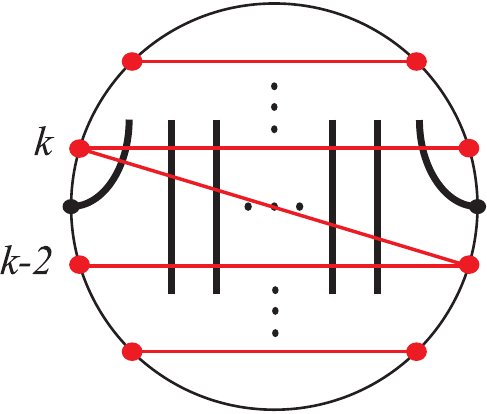}
\caption{Figure 4.4a)}
\label{fig 4.4a)}
\end{minipage}\qquad 
\begin{minipage}{.2\textwidth}
\centering
\includegraphics[width=\linewidth]{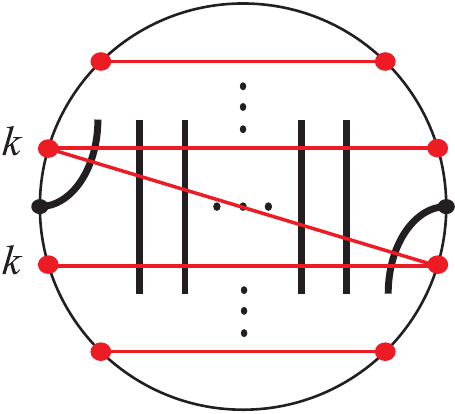}
\caption{Figure 4.4b)}
\label{fig 4.4b}
\end{minipage}\qquad \centering
\begin{minipage}{.2\textwidth}
\centering
\includegraphics[width=\linewidth]{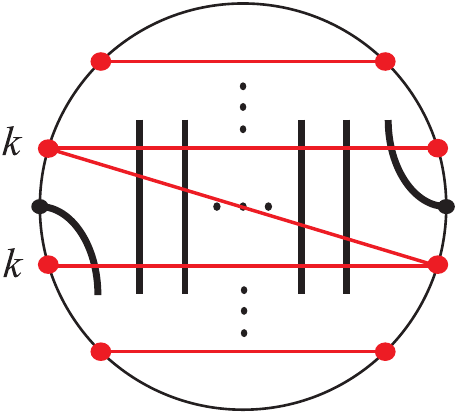}
\caption{Figure 4.4c)}
\label{fig 4.4c)}
\end{minipage}\qquad 
\begin{minipage}{.22\textwidth}
\centering
\includegraphics[width=\linewidth]{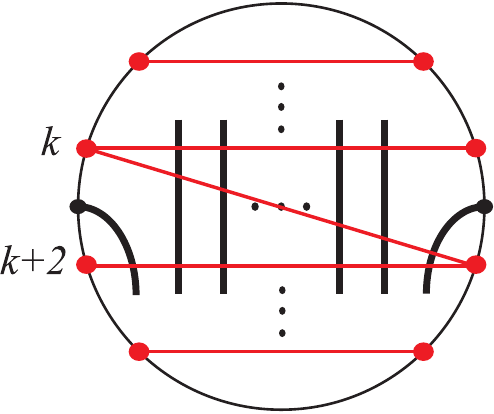}
\caption{Figure 4.4d)}
\label{fig 4.4d)}
\end{minipage}
\end{figure}

For instance, in the second case shown in \textrm{Figure} $4.4b)$, we have%
\begin{eqnarray*}
n_{2j}\left( C\right) &=&k,\text{ }n_{2j+1}\left( C\right) =k+1,\text{ and }%
n_{2j+2}\left( C\right) =k,\text{ thus} \\
r_{2j+1}\left( C\right) &=&1,\text{ and }r_{2j+2}\left( C\right) =-1.
\end{eqnarray*}%
Analogously, for the remaining cases shown in \textrm{Figure} $4.4a),$ 
\textrm{Figure} $4.4c),$ and \textrm{Figure} $4.4d).$ Therefore, for all $%
j=3,$ $4,$ $...,$ $2\left( m+n\right) -2,$ we have $r_{j}\left( C\right)
=\pm 1$.

For the remaining lines $l_{0},$ $l_{1},$ $l_{2},$ and $l_{2\left(
m+n-1\right) },$ $l_{2\left( m+n\right) -1},$ $l_{2\left( m+n\right) },$ we
see that%
\begin{eqnarray*}
n_{0}\left( C\right) &=&n_{2\left( m+n\right) }\left( C\right) =0,\text{ }%
n_{1}\left( C\right) =n_{2\left( m+n\right) -1}\left( C\right) =1,\text{ and}
\\
n_{2}\left( C\right) &=&0,\text{ }2;\text{ }n_{2\left( m+n-1\right) }\left(
C\right) =0,\text{ }2.
\end{eqnarray*}%
Therefore, we have $r_{1}\left( C\right) =1,$ $r_{2}\left( C\right) =-1$ or $%
r_{2}\left( C\right) =1$ and analogously $r_{2\left( m+n\right) -1}\left(
C\right) =1$ or $r_{2\left( m+n\right) -1}\left( C\right) =-1,$ $r_{2\left(
m+n\right) }\left( C\right) =-1$. Hence, we have $r_{j}\left( C\right) =\pm
1,$ for all $j=1,2,...,2\left(m+n\right) $ as we claimed.
\end{proof}

\smallskip

\noindent Now, let $C\in \mathfrak{Cat}_{m,n}$ and let%
\begin{equation*}
a_{j}\left( C\right) =\left\{ 
\begin{tabular}{lll}
$H$ & if & $r_{j}\left( C\right) =-1$ \\ 
$V$ & if & $r_{j}\left( C\right) =1$%
\end{tabular}%
\right. ,\text{ }j=1,2,...,2\left( m+n\right)
\end{equation*}%
We define a lattice path $\mathbf{p}\left( C\right) $ by putting%
\begin{equation*}
\mathbf{p}\left( C\right) =a_{1}\left( C\right) a_{2}\left( C\right)
...a_{2\left( m+n\right) }\left( C\right) .
\end{equation*}%
Clearly, for any $C\in \mathfrak{Cat}_{m,n},$ lattice path $\mathbf{p}\left(
C\right) $ of semilength $m+n$ from $\left( 0,0\right) $ to $\left( m+n,%
\text{ }m+n\right) $ never pass below the line $y=x,$ thus $\mathbf{p}\left(
C\right) $ is a Dyck path. Furthermore, we have the following result.

\begin{proposition}
\label{Proposition 4.5}The map $\psi :\mathfrak{Cat}_{m,n}\rightarrow 
\mathfrak{D}_{m,n}$ given by%
\begin{equation*}
\psi \left( C\right) =\mathbf{p}\left( C\right)
\end{equation*}%
is a bijection$.$
\end{proposition}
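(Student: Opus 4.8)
The plan is to prove that $\psi$ is a bijection by producing an explicit inverse and, as the decisive ingredient, showing that a Catalan state is completely determined by the order in which its arcs open and close along the sweep of the lines $l_0,\dots,l_{2(m+n)}$. First I would pin down the geometric meaning of the increments $r_j(C)$. Passing from $l_{j-1}$ to $l_j$ crosses exactly one boundary point $p_j$, and $n_j(C)$ equals the number of arcs of $C$ separated by $l_j$, i.e. arcs having one endpoint among $p_1,\dots,p_j$ and the other among $p_{j+1},\dots,p_{2(m+n)}$. Hence $r_j(C)=+1$ exactly when $p_j$ is the earlier endpoint of its arc (an \emph{opener}) and $r_j(C)=-1$ exactly when $p_j$ is the later endpoint (a \emph{closer}). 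Thus $\mathbf{p}(C)$ is nothing but the word recording, in boundary order, which points open and which close; Lemma \ref{Lemma 4.4} guarantees it is a $\pm 1$ word, and the remarks preceding the statement show it lies in $\mathfrak{D}_{m,n}$.

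Next I would define a candidate inverse $\phi:\mathfrak{D}_{m,n}\to\mathfrak{Cat}_{m,n}$ by the usual bracket-matching rule: reading a Dyck word $a_1 a_2\cdots a_{2(m+n)}$, treat each $V$ as an opening and each $H$ as a closing bracket, match every $H$ to the nearest preceding unmatched $V$, and join the corresponding boundary points by an arc. Because the word is balanced and no prefix has more $H$'s than $V$'s, every closer acquires a partner, so $\phi$ yields a perfect matching of the $2(m+n)$ points; and because matched brackets are properly nested, the arcs can be realized disjointly inside $P$, so $\phi(D)\in\mathfrak{Cat}_{m,n}$. The identity $\psi\circ\phi=\mathrm{id}$ is then immediate: in $\phi(D)$ a point is an opener precisely when the corresponding letter is $V$, so reading off the opener/closer pattern via $\psi$ returns $D$. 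In particular $\psi$ is already surjective.

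It remains to check $\phi\circ\psi=\mathrm{id}$, which is the heart of the argument and the step I expect to be the main obstacle. Equivalently, I must show that $C$ is the only non-crossing matching having opener/closer word $\psi(C)$, and that the greedy rule defining $\phi$ reproduces it. The key sublemma to establish is that in any non-crossing matching the leftmost closer $p_i$ is joined to its immediate predecessor $p_{i-1}$: since $p_i$ is the first closer, all of $p_1,\dots,p_{i-1}$ are openers, and if $p_i$ were matched to some $p_j$ with $j<i-1$, the non-crossing condition would force the points $p_{j+1},\dots,p_{i-1}$ to be matched among themselves, which is impossible as they are all openers; hence $j=i-1$, exactly as the greedy rule dictates. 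Deleting the adjacent pair $\{p_{i-1},p_i\}$ leaves a non-crossing matching on the remaining $2(m+n)-2$ points whose opener/closer word is obtained by erasing the matched $VH$, so induction on $m+n$ finishes both the uniqueness and the identity $\phi\circ\psi=\mathrm{id}$. Alternatively, once $\psi\circ\phi=\mathrm{id}$ yields surjectivity, one may simply invoke the fact that $\mathfrak{Cat}_{m,n}$ and $\mathfrak{D}_{m,n}$ are both enumerated by the $(m+n)$-th Catalan number to conclude; but the inductive uniqueness argument is self-contained and is the version I would write out in full.
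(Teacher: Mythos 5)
Your proof is correct, but it takes a more constructive route than the paper's. The paper opens with ``it suffices to show that $\psi$ is injective'' (implicitly invoking the fact that $\mathfrak{Cat}_{m,n}$ and $\mathfrak{D}_{m,n}$ are both enumerated by the Catalan number $C_{m+n}$, so an injection between them is automatically a bijection), and then proves injectivity by the telescoping argument: if $\psi(C_1)=\psi(C_2)$ then all increments $r_j$ agree, hence by induction from $n_0=0$ all intersection numbers $n_j(C_1)=n_j(C_2)$ agree, ``hence $C_1=C_2$.'' That final step --- that a Catalan state is determined by its minimal intersection numbers with the lines $l_0,\dots,l_{2(m+n)}$ --- is asserted without proof, and it is exactly the content of your key sublemma that a non-crossing matching is determined by its opener/closer word. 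You supply this missing detail via the standard induction (the leftmost closer must be matched to its immediate predecessor, delete the adjacent pair, recurse), and you additionally construct the explicit inverse $\phi$ by bracket matching and verify both composites, which makes surjectivity constructive rather than a cardinality count. So your argument is longer but self-contained, whereas the paper's is shorter but leans on the unproven determination claim and on the equal-cardinality fact; your version is the one a careful reader would have to reconstruct anyway to justify the paper's last sentence.
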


\begin{proof}
It suffices to show that $\psi :\mathfrak{Cat}_{m,n}\rightarrow \mathfrak{D}%
_{m,n}$ is injective. Let $C_{1},$ $C_{2}\in \mathfrak{Cat}_{m+n}$ and
suppose that%
\begin{equation*}
\psi \left( C_{1}\right) =\mathbf{p}\left( C_{1}\right) =\mathbf{p}\left(
C_{2}\right) =\psi \left( C_{2}\right) .
\end{equation*}%
therefore, for every $j=1,$ $2,$ $...,$ $2\left( m+n\right) ,$ $a_{j}\left(
C_{1}\right) =a_{j}\left( C_{2}\right) ,$ that is%
\begin{equation*}
r_{j}\left( C_{1}\right) =n_{j}\left( C_{1}\right) -n_{j-1}\left(
C_{1}\right) =n_{j}\left( C_{2}\right) -n_{j-1}\left( C_{2}\right)
=r_{j}\left( C_{2}\right)
\end{equation*}%
Since $n_{0}\left( C_{1}\right) =n_{0}\left( C_{2}\right) ,$ and $%
r_{1}\left( C_{1}\right) =r_{1}\left( C_{2}\right) ,$ we have $n_{1}\left(
C_{1}\right) =n_{1}\left( C_{2}\right) .$ By induction, we have%
\begin{equation*}
n_{j}\left( C_{1}\right) =n_{j}\left( C_{2}\right) ,\text{ }%
j=1,2,...,2\left( m+n\right) .
\end{equation*}%
It follows that $C_{1}$ and $C_{2}$ have identical minimal number of
intersections with all lines $l_{0},$ $l_{1},$ $...,$ $l_{2\left( m+n\right)
},$ hence $C_{1}=C_{2}$.
\end{proof}

\medskip

\noindent Following the notations used in \cite{NAR} (see pp. $9$ and Lemma $%
4A,$ pp. $12$), let $L\left( m,\text{ }n;\text{ }t\right) $ denote the set
of paths from $\left( 0,\text{ }0\right) $ to $\left( m,\text{ }n\right) $
not touching the line $y=x-t,$ where $t>0,$ and let $L\left( m,\text{ }n;%
\text{ }t,\text{ }s\right) $ denote the set of paths from $\left( 0,0\right) 
$ to $\left( m,n\right) $ not touching the lines%
\begin{equation*}
y=x-t\text{ and }y=x+s,
\end{equation*}%
where $t,$ $s>0.$ As it was shown in \cite{NAR}\footnote{%
Solution to the restricted ballot problem with two boundary conditions,
including very interesting historical comment of Kelvin and Maxwell's 
\textit{method of images,} is given in the classic book by W.Feller \cite%
{Fel}; see also \cite{Moh}, \cite{Taka}.}, we have%
\begin{eqnarray*}
\left\vert L\left( m,n;t\right) \right\vert &=&\binom{m+n}{n}-\binom{m+n}{m-t%
},\text{ and } \\
\left\vert L\left( m,n;t,s\right) \right\vert &=&\sum_{k\in \mathbb{Z}}\left[
\binom{m+n}{m-k\left( t+s\right) }_{+}-\binom{m+n}{m+k\left( t+s\right) +t}%
_{+}\right] ,
\end{eqnarray*}%
where%
\begin{equation*}
\binom{y}{z}_{+}=\left\{ 
\begin{tabular}{lll}
$\binom{y}{z}$ & $if$ & $y\geq z\geq 0$ \\ 
$0$ & $if$ & $y<0$ or $y<z$%
\end{tabular}%
\right .
\end{equation*}

\begin{theorem}
\label{Theorem 4.6}The number $T^{h}\left( m,n\right) $ of horizontally
excluded Catalan states $($see also \textrm{Figure} $4.6)$ is given by%
\begin{equation*}
T^{h}\left( m,n\right) =\sum_{i=0}^{\infty }\left[ \binom{2m+2n}{m-i\left(
n+3\right) -2}-2\binom{2m+2n}{m-i\left( n+3\right) -3}+\binom{2m+2n}{%
m-i\left( n+3\right) -4}\right]
\end{equation*}
\end{theorem}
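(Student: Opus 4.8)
The plan is to count $T^{h}\left( m,n\right) $ by transporting the problem to Dyck paths through the bijection $\psi $ of Proposition \ref{Proposition 4.5} and then applying the two-barrier ballot count recalled from \cite{NAR}. Under $\psi $ the numbers $n_{j}\left( C\right) $ are exactly the heights of $\mathbf{p}\left( C\right) $ above the diagonal $y=x$, with $n_{0}=n_{2\left( m+n\right) }=0$ and unit increments $r_{j}=\pm 1$ by Lemma \ref{Lemma 4.4}. My first task is therefore to translate the horizontal-exclusion condition $d^{h}\left( C\right) >n$ into a statement about these heights. The key elementary observation is a parity constraint: the horizontal line $l_{i}^{h}$ has $n+2i$ boundary points above it, so $\#\left( l_{i}^{h}\cap C\right) \equiv n\pmod{2}$ for every $i$, whence $d^{h}\left( C\right) \equiv n\pmod{2}$ and consequently $d^{h}\left( C\right) >n$ if and only if $d^{h}\left( C\right) \geq n+2$.

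Next I would identify $d^{h}\left( C\right) $ with the maximal height reached by $\mathbf{p}\left( C\right) $. Using the arrangement of the lines $l_{0},\dots ,l_{2\left( m+n\right) }$ in \textrm{Figure} $4.3$, each intersection number $\#\left( l_{i}^{h}\cap C\right) $ appears among the heights $n_{j}\left( C\right) $, and Proposition \ref{Proposition 4.1}, applied to a horizontal line paired with a vertical one (for which $a_{1}+a_{3}=a_{2}+a_{4}=m+n$), controls how a high crossing of a horizontal line forces the path to climb. The goal of this step is the clean equivalence: $C$ is horizontally excluded if and only if $\mathbf{p}\left( C\right) $ attains height at least $n+2$. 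I expect this identification -- pinning the exact threshold and showing the horizontal lines realize the path's maximum -- to be the main obstacle, precisely because the chord cut off by $l_{i}^{h}$ is not an initial segment, so one must argue through Proposition \ref{Proposition 4.1} together with the no-return analysis of Lemma \ref{Lemma 2.2} that the extremal horizontal crossing equals the path height.

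Granting this equivalence, $T^{h}\left( m,n\right) $ equals the number of Dyck paths of semilength $m+n$ from $\left( 0,0\right) $ to $\left( m+n,m+n\right) $ that reach height at least $n+2$. I would count these by complementation: subtract from the total Catalan number the Dyck paths confined to the strip $0\leq y-x\leq n+1$. The confined paths are exactly those avoiding both $y=x-1$ and $y=x+\left( n+2\right) $, that is, the set $L\left( m+n,\,m+n;\,1,\,n+2\right) $, a strip of width $t+s=n+3$. Thus
\begin{equation*}
T^{h}\left( m,n\right) =\left[ \binom{2\left( m+n\right) }{m+n}-\binom{2\left( m+n\right) }{m+n-1}\right] -\left\vert L\left( m+n,\,m+n;\,1,\,n+2\right) \right\vert ,
\end{equation*}
where the bracketed difference is the Catalan number counting all Dyck paths, equal to $\left\vert L\left( m+n,m+n;1\right) \right\vert $.

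Finally I would expand $\left\vert L\left( m+n,m+n;1,n+2\right) \right\vert $ with the method-of-images formula recalled above (taking $t=1$, $s=n+2$, $t+s=n+3$). The $k=0$ image contributes $\binom{2\left( m+n\right) }{m+n}-\binom{2\left( m+n\right) }{m+n+1}=\binom{2\left( m+n\right) }{m+n}-\binom{2\left( m+n\right) }{m+n-1}$, which cancels the Catalan term, leaving only the $k\neq 0$ images. Pairing $k$ with $-k$ and using the symmetry $\binom{2\left( m+n\right) }{m+n+a}=\binom{2\left( m+n\right) }{m+n-a}$, the four surviving families of binomials collapse, for each $i\geq 1$, into the second difference $\binom{2\left( m+n\right) }{m+n-i\left( n+3\right) +1}-2\binom{2\left( m+n\right) }{m+n-i\left( n+3\right) }+\binom{2\left( m+n\right) }{m+n-i\left( n+3\right) -1}$. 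Since $m+n-i\left( n+3\right) =m-\left( i-1\right) \left( n+3\right) -3$, re-indexing by $i-1$ puts the three terms at lower indices $m-\left( i-1\right) \left( n+3\right) -2,\,-3,\,-4$, which is exactly the stated form with period $n+3$ and offsets $-2,-3,-4$, completing the proof. The only delicate bookkeeping here is tracking the $\binom{\cdot }{\cdot }_{+}$ truncations so that the re-indexed sum begins precisely at $i=0$.
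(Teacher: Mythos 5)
Your proposal is correct and follows essentially the same route as the paper: the bijection $\psi$ of Proposition \ref{Proposition 4.5}, the translation of $d^{h}(C)>n$ into the path exceeding height $n+1$, and the complementation $\left\vert L(m+n,m+n;1)\right\vert -\left\vert L(m+n,m+n;1,n+2)\right\vert$ followed by the method-of-images expansion and re-indexing, which you carry out correctly where the paper only says ``after standard algebraic simplifications.'' One small caution: you cannot literally ``identify $d^{h}(C)$ with the maximal height of $\mathbf{p}(C)$'' (the intermediate odd-indexed lines can raise the height to $d^{h}(C)+1$, as the paper's remark before the proof about the pattern $n,\,n+1,\,n$ shows), but the equivalence you actually use --- horizontally excluded iff the height reaches $n+2$ --- is the correct one and follows from your parity observation.
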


\noindent Before we prove Theorem \ref{Theorem 4.6}, we note that if $i=2j,$
lines $l_{i},$ $l_{i+1}$ and $l_{i+2}$ may intersect a Catalan state $C$
satisfying horizontal line condition $H\left( m,n\right) $ in the pattern: $%
n_{i}\left( C\right) =n,$ $n_{i+1}\left( C\right) =n+1$ and $n_{i+2}\left(
C\right) =n$ (see \textrm{Figure} $4.5$)$.$

\begin{figure}[h]
\centering
\begin{minipage}{.27\textwidth}
\centering
\includegraphics[width=\linewidth]{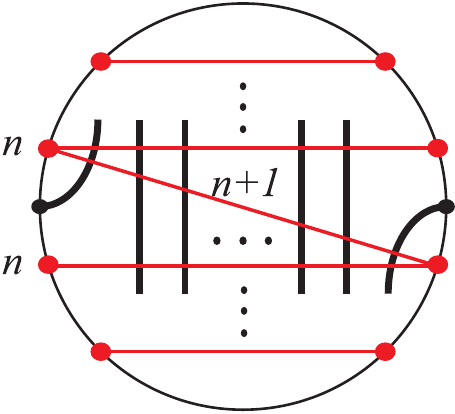}
\caption{Figure 4.5}
\label{fig 4.5}
\end{minipage}\qquad 
\begin{minipage}{.27\textwidth}
\centering
\includegraphics[width=\linewidth]{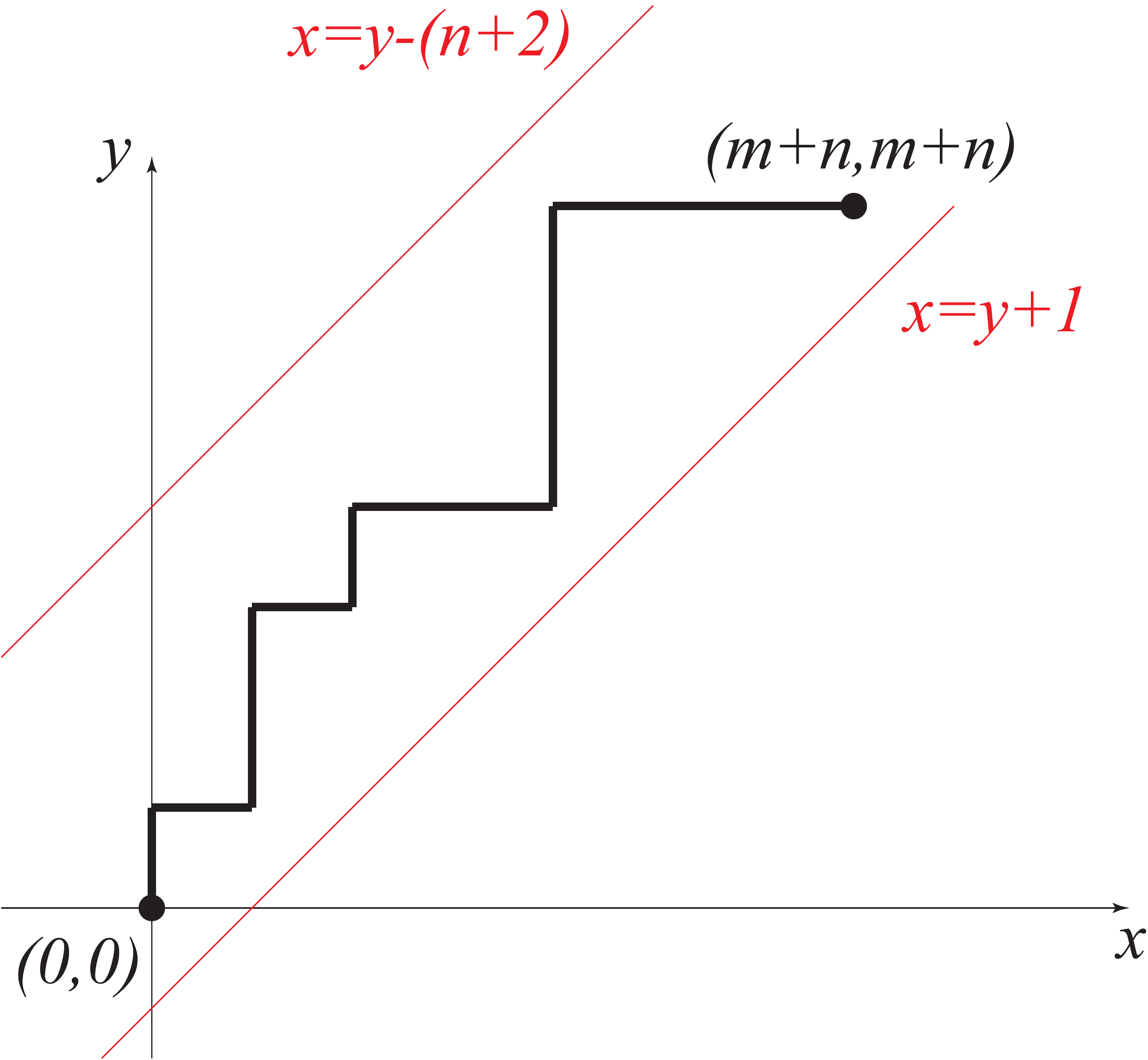}
\caption{Figure 4.6}
\label{fig 4.6}
\end{minipage}
\end{figure}

\noindent Recall, for a lattice path $\mathbf{p}=a_{1}a_{2}...a_{2k}$ that
starts from $\left( 0,0\right) $, one defines%
\begin{eqnarray*}
H_{j}\left( \mathbf{p}\right) &=&\#\left\{ a_{i}\text{ }|\text{ }a_{i}=H,%
\text{ }i=1,\text{ }2,\text{ }...,\text{ }j\right\} \text{ - number of
horizontal steps} \\
V_{j}\left( \mathbf{p}\right) &=&\#\left\{ a_{i}\text{ }|\text{ }a_{i}=V,%
\text{ }i=1,\text{ }2,\text{ }...,\text{ }j\right\} \text{ - number of
vertical steps}
\end{eqnarray*}%
where $1\leq j\leq k,$ and 
\begin{equation*}
S_{j}\left( \mathbf{p}\right) =V_{j}\left( \mathbf{p}\right) -H_{j}\left( 
\mathbf{p}\right) ,\text{ }1\leq j\leq k.
\end{equation*}%
Obviously, path $\mathbf{p}\left( C\right) ,$ for any $j=1,$ $2,$ $...,$ $%
2\left( m+n\right) ,$ we have$:$%
\begin{equation*}
S_{j}\left( \mathbf{p}\left( C\right) \right) =V_{j}\left( \mathbf{p}\right)
-H_{j}\left( \mathbf{p}\right) =\sum_{i=1}^{j}r_{i}\left( C\right)
=n_{j}\left( C\right) \geq 0.
\end{equation*}

\begin{proof}
Using bijection $\psi :\mathfrak{Cat}_{m,n}\rightarrow \mathfrak{D}_{m,n}$
from Proposition \ref{Proposition 4.5}, we have:%
\begin{eqnarray*}
T^{h}\left( m,n\right) &=&\left\vert \mathbf{T}^{h}\left( m,n\right)
\right\vert =\left\vert \left\{ C\in \mathfrak{Cat}_{m,n}\text{ }|\text{ }%
S_{j}\left( \mathbf{p}\left( C\right) \right) >n+1,\text{ for some }0\leq
j\leq 2\left( m+n\right) \right\} \right\vert \\
&=&\left\vert \mathfrak{D}_{m,n}\right\vert -\left\vert \left\{ \mathbf{p}%
\in \mathfrak{D}_{m,n}\text{ }|\text{ }1\leq S_{j}\left( \mathbf{p}\right)
\leq n+1,\text{ for all }0\leq j\leq 2\left( m+n\right) \right\} \right\vert
\end{eqnarray*}%
Since

\begin{equation*}
\left\vert \mathfrak{D}_{m,n}\right\vert =\left\vert L\left( m+n,\text{ }m+n;%
\text{ }1\right) \right\vert
\end{equation*}%
and

\begin{equation*}
\left\vert \left\{ \mathbf{p}\in \mathfrak{D}_{m,n}\text{ }|\text{ }1\leq
S_{j}\left( \mathbf{p}\right) \leq n+1,\text{ for all }0\leq j\leq 2\left(
m+n\right) \right\} \right\vert =\left\vert L\left( m+n,m+n;1,n+2\right)
\right\vert
\end{equation*}%
After standard algebraic simplifications, we have

\begin{eqnarray*}
T^{h}\left( m,n\right) &=&\left\vert L\left( m+n,m+n;\text{ }1\right)
\right\vert -\left\vert L\left( m+n,m+n;\text{ }1,\text{ }n+2\right)
\right\vert \\
&=&\sum_{i=0}^{\infty }\left[ \binom{2m+2n}{m-i\left( n+3\right) -2}-2\binom{%
2m+2n}{m-i\left( n+3\right) -3}+\binom{2m+2n}{m-i\left( n+3\right) -4}\right]%
.
\end{eqnarray*}%
This finishes our proof.
\end{proof}

\medskip

\begin{corollary}
\label{Corollary 4.7}The number of all realizable Catalan states by $m\times
n$ lattice of crossings $T_{\left( m,n\right) }$ is given by%
\begin{equation*}
\left\vert \mathfrak{Cat}_{m,n}^{R}\right\vert =\frac{1}{m+n+1}\binom{%
2\left( m+n\right) }{m+n}-T^{h}\left( m,n\right) -T^{h}\left( n,m\right) .
\end{equation*}%
In particular, for $m=n,$ 
\begin{equation*}
\left\vert \mathfrak{Cat}_{n,n}^{R}\right\vert =\frac{1}{2n+1}\binom{4n}{2n}%
-2\left( \binom{4n}{n-2}-2\binom{4n}{n-3}+\binom{4n}{n-4}\right)
\end{equation*}
\end{corollary}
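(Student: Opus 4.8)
The plan is to assemble results already established, since the corollary is essentially a counting identity. First I would pin down the total number of Catalan states. Because $\mathfrak{Cat}_{m,n}$ is exactly the set of crossingless connections between the $2(m+n)$ boundary points of $P$ — that is, non-crossing perfect matchings of $2(m+n)$ points on a circle — its cardinality is the Catalan number. The cleanest route is to invoke the bijection $\psi$ of Proposition \ref{Proposition 4.5}, which identifies $\mathfrak{Cat}_{m,n}$ with the set $\mathfrak{D}_{m,n}$ of Dyck paths of semilength $m+n$, so
\begin{equation*}
\left\vert \mathfrak{Cat}_{m,n}\right\vert = \left\vert \mathfrak{D}_{m,n}\right\vert = \frac{1}{m+n+1}\binom{2(m+n)}{m+n}.
\end{equation*}

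Next I would split $\mathfrak{Cat}_{m,n}$ into realizable and forbidden states. By Theorem \ref{Theorem 2.5} a state is realizable precisely when it satisfies both $H(m,n)$ and $V(m,n)$, so $\mathfrak{Cat}_{m,n}^{R}$ and $\mathfrak{F}_{m,n}$ partition $\mathfrak{Cat}_{m,n}$ and
\begin{equation*}
\left\vert \mathfrak{Cat}_{m,n}^{R}\right\vert = \left\vert \mathfrak{Cat}_{m,n}\right\vert - \left\vert \mathfrak{F}_{m,n}\right\vert.
\end{equation*}
By Corollary \ref{Corollary 4.3} the forbidden count is $T^{h}(m,n)+T^{v}(m,n)$, and the symmetry $T^{v}(m,n)=T^{h}(n,m)$ recorded immediately after that corollary lets me replace $T^{v}(m,n)$ by $T^{h}(n,m)$. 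Substituting both into the previous display gives the first formula of the corollary directly.

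For the case $m=n$ the two forbidden contributions coincide, so $\left\vert \mathfrak{F}_{n,n}\right\vert = 2\,T^{h}(n,n)$, and the remaining work is to simplify $T^{h}(n,n)$ using Theorem \ref{Theorem 4.6}. The key observation is that the infinite sum collapses to its $i=0$ term: for every $i\geq 1$ the top entry of each of the three binomials is at most $n-(n+3)-2 = -5 < 0$, so under the convention that a binomial with negative lower argument vanishes, all terms with $i\geq 1$ are zero. This leaves
\begin{equation*}
T^{h}(n,n) = \binom{4n}{n-2} - 2\binom{4n}{n-3} + \binom{4n}{n-4},
\end{equation*}
and plugging this into $\left\vert \mathfrak{Cat}_{n,n}^{R}\right\vert = \frac{1}{2n+1}\binom{4n}{2n} - 2\,T^{h}(n,n)$ yields the stated expression.

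I do not expect a genuine obstacle here: the corollary follows mechanically from Theorem \ref{Theorem 2.5}, Corollary \ref{Corollary 4.3}, the symmetry relation, and Theorem \ref{Theorem 4.6}. The only step deserving an explicit word is the identification of $\left\vert \mathfrak{Cat}_{m,n}\right\vert$ with the Catalan number, which is why I would route it through the already-proved bijection $\psi$ rather than quote it as folklore; and the collapse of the sum to a single term in the $m=n$ specialization, which is a short index check.
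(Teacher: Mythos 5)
Your proposal is correct and follows exactly the route the paper intends: the corollary is stated there without proof precisely because it is the mechanical assembly of the Catalan-number count via the bijection $\psi$, the realizability criterion, Corollary \ref{Corollary 4.3} with the symmetry $T^{v}(m,n)=T^{h}(n,m)$, and the collapse of the sum in Theorem \ref{Theorem 4.6} to its $i=0$ term when $m=n$. The only nitpick is that it is the \emph{lower} argument of the binomial coefficients that goes negative for $i\geq 1$ (as your parenthetical convention correctly states), not the top entry.
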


\section{Acknowledgments}

J.~H.~Przytycki was partially supported by the NSA-AMS 091111 grant, and by
the GWU REF grant. Authors would like to thank Ivan Dynnikov and Krzysztof
Putyra for many useful computations/discussions.

\begin{equation*}
\begin{tabular}{ll}
Mieczyslaw K. Dabkowski & Changsong Li \\ 
Department of Mathematical Sciences & Department of Mathematical Sciences \\ 
University of Texas at Dallas & University of Texas at Dallas \\ 
Richardson TX, 75080 & Richardson TX, 75080 \\ 
\emph{e-mail}: \texttt{mdab@utdallas.edu} & \emph{e-mail}: \texttt{%
changsong.li@utdallas.edu} \\ 
& \multicolumn{1}{c}{} \\ 
Jozef H. Przytycki & \multicolumn{1}{c}{} \\ 
Department of Mathematics &  \\ 
The George Washington University &  \\ 
Washington, DC 20052 &  \\ 
\emph{e-mail}: \texttt{przytyck@gwu.edu} &  \\ 
University of Maryland College Park, &  \\ 
and University of Gda\'{n}sk & 
\end{tabular}%
\end{equation*}

\end{document}